\newtheorem{definition}{Definition}[section]
\newtheorem{lemma}{Lemma}[section]
\newtheorem{remark}{Remark}[section]
\newtheorem{theorem}{Theorem}[section]
\newenvironment{proof}[1][Proof]{\noindent\textbf{#1.} }{\ \rule{0.5em}{0.5em}}
\newcommand{\M}{\mathcal{M}}
\newcommand{\sumIN}{\sum_{i=0}^{N-1}}
\newcommand{\bOne}{\textbf{1}}
\numberwithin{equation}{section} \hfuzz3pt
\begin{document}

\title{Portfolio Optimization under Correlation Constraint}
\author[1]{Aditya Maheshwari\thanks{Email: aditya.maheshwari0210@gmail.com. The views expressed in this article are the authors and do not necessarily reflect the views of Bank of America Securities.}}
\author[2]{Traian Pirvu\thanks{Email: tpirvu@math.mcmaster.ca}}
\affil[1]{Bank of America Securities, New York}
\affil[2]{Department of Mathematics and Statistics , McMaster University, Canada}

\maketitle

\sloppy%avoids the breakage of words at the end of lines, by adjusting spaces between words inside the lines

% \onehalfspacing

\begin{abstract}
We consider the problem of portfolio optimization with a correlation constraint.
The framework is the multiperiod stochastic financial market setting with one tradable stock, stochastic income and a non-tradable index.
The correlation constraint is imposed on the portfolio and the non-tradable index at some benchmark time horizon. The goal is to maximize portofolio's expected exponential utility subject to the correlation constraint. Two types of optimal portfolio strategies are considered: the subgame perfect and the precommitment ones.   
We find analytical expressions for the constrained subgame perfect (CSGP) and the constrained precommitment (CPC) portfolio strategies. Both these portfolio strategies yield significantly lower risk when compared to the unconstrained setting, at the cost of a small utility loss. The performance of the CSGP and CPC portfolio strategies is similar.  
\end{abstract}

% \strut

\textbf{Keywords: portfolio optimization, correlation constraints} 

\strut

\textbf{JEL Classification Numbers: 91B06, 60H30} 

%\doublespacing %makes space between lines to be double, use singlespacing for space 1

\section{Introduction}
Mean variance optimization introduced by Markowitz \cite{HMM52} changed the pathway of how fund 
managers look at optimal investments. However, mean variance analysis is a special case of more 
general expected utility framework introduced by von Neumann and Morgenstern~\cite{NM1947}.
Our goal is to maximize expected exponential utility function subject to a correlation
constraint. Portfolio optimization under correlation constraints is of particular interest to hedge funds 
which implement market defensive strategies with higher returns during down markets than up-markets,
 and this makes our research interesting from a practical standpoint.

Although portfolio allocation within expected utility paradigm is very well explored problem in finance, few studies have  considered  portfolio allocation with correlation constraint. Recent works that stand out and are also the motivation of this study are \cite{BV14},\cite{PKP16} and \cite{BCV18}. In \cite{BV14}, the authors addressed the problem of mean  variance optimization along with correlations constraint. The work \cite{PKP16} looks at the problem of capital allocation with correlation constraint, and the objective is minimizing capital at risk.  The paper \cite{BCV18} extended \cite{BV14}, and \cite{PKP16} to law invariant preferences. All these works consider a static framework; the optimization is performed at time zero and the optimal strategies are implemented. However, if at a later time the optimization criterion and the risk constraint are dynamically updated the time zero strategy fails to remain optimal due to the correlation constraint; this is refer to as time inconsistency. In the presence of a commitment mechanism the time zero optimal strategy also known as the precommitment strategy, will be implemented. Otherwise, there will be an incentive to deviate from  the precommitment strategy and one way out of this predicament is to consider the subgame perfect strategy, also known as the Nash equilibrium strategy.      

This paper considers the expected exponential utility maximization with correlation constraint in a discrete time setting. The multi-period stochastic financial market comprises one tradable stock, stochastic income and a non-tradable index.  The non-linearity of correlation constraint turn the optimization problem time inconsistent as explained above. The constrained subgame perfect strategy (CSGP) is naturally defined in our context through backward induction. The constrained precommitment strategy (CPC) is considered as well.
 We compare the performance of these two portfolio strategies against the unconstrained subgame perfect strategy (UnSGP). 

\textbf{Contribution:} Our first and foremost contribution is the analytic solutions for the CSGP and CPC strategies. To the best of our knowledge, we are not aware of other studies which consider time inconsistency arising due to the correlation constraint in the context of portfolio optimization in a multi-period setting. Secondly, through numerical simulations we provide evidence for significant risk reduction in ``high risk'' environment by following CSGP and CPC strategies compared to the UnSGP strategy. To illustrate an example, the increase in uncertainty due to increasing the investment horizon from 2 period to 10 period results in 50\% improvement in utility at the expense of 150\% increase in risk for the UnSGP strategy. In contrast, our CSGP strategy results in 30\% improvement in utility together with 18\% \emph{reduction} in risk. Thirdly, we find the CSGP and CPC strategy to perform approximately the same for our portfolio allocation problem. This in contrast to the results in \cite{Wu2014} where authors find the CPC strategy to significantly outperform the CSGP strategy in the context of mean-variance optimization.

The rest of the paper is organized as follows - Section \ref{Model} describes the model; the stock price process, benchmark index and stochastic income stream, the trading strategies, risk preference, and the objective. Section \ref{SinglePeriodOpt} gives our first result for single period problem. Section \ref{MultiPeriod} provides the result for the multi-period setting. In Section \ref{numerics}, we present numerical examples. The paper concludes in Section \ref{conclusion}. The proofs of the results are presented in the Appendix. \\

\section{Model}
\label{Model} 
In this paper, we consider a multi-period stochastic model of investment. The randomness is associated with three discrete time random walks $\epsilon_n^{s}, \epsilon_n^{I}$ and $\epsilon_n^{b}$ defined on a complete probability space $(\Omega,\mathscr{F}t_{N},\left \{\mathscr{F}_{t_{n}}\right \},\mathbb{P})$. The random walks are symmetric in $\mathbb{P}$ and  
\begin{equation}
\mathbb{P}(\epsilon_n^{j}=\pm 1)=1/2, \quad n=0,1,\ldots,N,\,\,\, j={s,I,b}.
\end{equation}
There are two securities available for trading, the stock and the bond. We take bond as numeraire and thus assume it offers zero interest rate. There is a stream of income for the agent and a benchmark index representing the state of the economy. \\

The trading horizon is $[0,T]$, which we divide into $N$ trading dates $t=0,h,...,(N-1)h$. Here $h=T/N$, and $t_n=nh$ for $n=0,1,2,...,N$. The stock price process $S :=\{S_t: t=0,h,...,(N-1)h\}$ follows the difference equation 
\begin{equation}
\Delta S_{t_n}=S_{t_n}(\mu_1h + \sigma_1\sqrt{h} \epsilon_n^{s}),
\end{equation}
where $S_0>0$ and $\Delta S_{t_n}=S_{t_{n+1}}-S_{t_{n}}$. Drift parameter $\mu_1$ and volatility parameter $\sigma_1$ are chosen so that the stock price remains positive. The agent in this economy also receives random income at every time point. Income process $I :=\{I_t: t=0,h,...,(N-1)h\}$ of the agent is given by 
\begin{equation}
\Delta I_{t_n}=e^{k t_{n}}|\mu_2h + \sigma_2\sqrt{h} \epsilon_n^{I}|.
\end{equation}
The benchmark index $B :=\{B_t: t=0,h,...,(N-1)h\}$, represents the performance of the economy and is correlated to the underlying stock and the income. It is given by the following difference equation: 
\begin{equation}
\Delta B_{t_n}=B_{t_n}(\mu_3h + \sigma_3\sqrt{h} (a_{31}\epsilon_n^{s} + a_{32}\epsilon_n^{I} + a_{33}\epsilon_n^{b}))
\end{equation}
where $a_{31}>0$, $a_{32}>0$ and $a_{33}=\sqrt{1-a_{31}^2-a_{32}^2}$. As previously, drift $\mu_3$ and volatility $\sigma_3$ are chosen so that the benchmark index is positive. 

\subsection{Trading Strategies}
An investor in this model starts with initial wealth $X_0=x\in \mathbb{R}$ for investment in stocks. Let $\pi_n$ be the amount of wealth invested in stock at time $t_n:n=0,1,2,...,N-1$. The wealth from investments is governed by the self-financing equation 
\begin{equation}
\Delta X_{t_n}=\pi_n(\mu_1h + \sigma_1\sqrt{h} \epsilon_n^{s})\\
\end{equation}
\subsection{Risk Preference}The utility of the agent is assumed to be of exponential type with coefficient of absolute risk aversion given by $\gamma$. Specifically,
\begin{equation}
U(x)=-\exp(-\gamma x)%
\label{expUtility}
\end{equation}
\subsection{Objective Function}
The performance of the strategy $\pi$ is measured by the expected utility criterion applied to final wealth subject to a correlation constraint. Assume 
the current time is $t_n.$The final wealth $W_{t_N}$ is given by sum of wealth accumulated due to investment and random income. Thus, $W_{t_N}=X_{t_N}+I_{t_N}$.
\begin{equation}
\begin{aligned}
& \underset{\pi \in \Pi_{t_n}}{\text{maximize}}
& & \mathbb{E}^\mathbb{P}[-e^{-\gamma W_{t_N}^\pi}|\mathscr{F}_{t_{n}}] \\
& \text{subject to}
& & correl(W_{t_N},B_{t_N}|\mathscr{F}_{t_{n}}) \leq -\delta\\
\end{aligned}
\label{optimizationProblem}
\end{equation}
where $\Pi_{t_n}$ denotes the set of admissible trading strategies defined by 
\begin{equation}
\Pi_{t_n}:=\left\{\pi_{t_n},\pi_{t_{n+1}},...,\pi_{t_{N-1}}: \pi_{t_k}\in \mathscr{F}_{t_{k}} \right\},
\end{equation}
and the constraint in \eqref{optimizationProblem} is satisfied. In most of works the performance of a strategy is measured at $t_n=t_0=0.$
\subsection{Time inconsistency}
Non-linearity of the correlation constraint makes the optimization problem \eqref{optimizationProblem} time inconsistent. More precisely, the investor may have incentive to deviate from the optimal strategy they computed at past times. Thus, if at time $t_0$ optimal strategy is give by
\begin{equation}
\hat{\pi}=
\arg \left\{\begin{aligned}
& \underset{\pi \in \Pi_{t_0}}{\text{maximize}}
& & \mathbb{E}^\mathbb{P}[-e^{-\gamma W_{t_N}^\pi}] \\
& \text{subject to}
& & correl(W_{t_N},B_{t_N}) \leq -\delta\\
\end{aligned}\right\}
\end{equation}
then the optimal strategy evaluated at future time $t_n$ may not be the same. Thus, for times between [$t_n$,$t_N$] 
\begin{equation}
\hat{\pi}\neq
\arg \left\{\begin{aligned}
& \underset{\pi \in \Pi_{t_n}}{\text{maximize}}
& & \mathbb{E}^\mathbb{P}[-e^{-\gamma W_{t_N}^\pi}|\mathscr{F}_{t_{n}}] \\
& \text{subject to}
& & correl(W_{t_N},B_{t_N}|\mathscr{F}_{t_{n}}) \leq -\delta\\
\end{aligned}\right\}
\end{equation}

\subsection{Subgame Perfect Strategy}
This paper follows \cite{MTH2014} and \cite{PZ2013} in introducing the subgame perfect portfolio strategies as a substitute for optimal investment strategy. 
Let us proceed with a formal definition.

\begin{definition}

Let us assume that total time $t_N$=T is divided into h small time steps. So, total number of time steps is $N=T/h$. First, let us consider the time period [(N-1)h,Nh]. At time (N-1)h the optimization problem is:
\begin{equation}
\begin{aligned}
& \underset{\pi_{t_{N-1}}}{\text{maximize}}
& & E[-e^{-\gamma(X_{t_N}^\pi + I_{t_N})}|\mathscr{F}_{t_{N-1}}] \\
& \text{subject to}
& & correl(W_{t_N},B_{t_N}|\mathscr{F}_{t_{N-1}}) \leq -\delta
\end{aligned}
\tag{P1}
\label{P1}
\end{equation}
On time period [(N-2)h,Nh], the trading strategies $\pi$ are considered to be of the form
\[ \pi =
  \begin{cases}
    	\pi_{t_{N-1}}^*       & \quad \text{on }  [(N-1)h,Nh)\\
   	\pi_{t_{N-2}}  & \quad \text{on }  [(N-2)h,(N-1)h)\\
  \end{cases}
\]
for $\mathscr{F}_{t_{N-2}}$ measurable $\pi_{t_{N-2}}$ such that $(\pi_{t_{N-2}},\pi_{t_{N-2}}^*)\in \Pi_{t_{N-2}}$. The optimization problem is 
\begin{equation}
\begin{aligned}
& \underset{\pi_{t_{N-2}}}{\text{maximize}}
& & E[-e^{-\gamma(X_{t_N}^\pi + I_{t_N})}|\mathscr{F}_{t_{N-2}}] \\
& \text{subject to}
& & correl(W_{t_N},B_{t_N}|\mathscr{F}_{t_{N-2}}) \leq -\delta\\
\end{aligned}
\tag{P2}
\label{P2}
\end{equation}
Now, we can proceed iteratively. For the period [(N-n)h,Nh], the trading strategies $\pi$ considered are of the form
\[ \pi =
  \begin{cases}
    	\pi_{t_k}^*       & \quad \text{when } k= N-n+1,N-n+2,...,N-1\\
   	\pi_{t_k}  & \quad \text{when }  k= N-n\\
  \end{cases}
\]
for $\mathscr{F}_{t_{N-n}}$ measurable $\pi_{t_{N-n}}$ such that $(\pi_{t_{N-n}},\left\{\pi_{t_k}^*\right\}_{k=N-n+1,N-n+2,...,N-1})\in \Pi_{t_{N-n}}$. The optimization problem is
\begin{equation}
\begin{aligned}
& \underset{\pi_{t_{N-n}}}{\text{maximize}}
& & E[-e^{-\gamma(X_{t_N}^\pi + I_{t_N})}|\mathscr{F}_{t_{N-n}}] \\
& \text{subject to}
& & correl(W_{t_N},B_{t_N}|\mathscr{F}_{t_{N-n}}) \leq -\delta\\
\end{aligned}
\tag{Pn}
\label{Pn}
\end{equation}
Thus, the subgame perfect strategy is $\pi^*=(\pi_{t_0}^*,\pi_{t_1}^*,\pi_{t_2}^*,....,\pi_{t_{N-1}}^*).$
\end{definition}

With the portfolio optimization problems in place, we are ready to state our first result.

\section{Single Period Optimization}
\label{SinglePeriodOpt}

\textbf{Standing Assumption} Within this section we assume that $\delta \in (0,a_{31})$ because whenever $\delta>a_{31}$ there are no admissible trading strategies. The following lemma gives the correlation for the single period optimization problem. Let
\begin{equation}
M=\frac{e^{Tk}}{2}(|\mu_2h + \sigma_2\sqrt{h}|+ |\mu_2h - \sigma_2\sqrt{h}|)
\end{equation}
\begin{lemma}
The correlation between total wealth $W_{t_N}$ and the index $B_{t_N}$ conditional on $\mathscr{F}_{t_{N-1}}$ is given by
\begin{equation}
correl(W_{t_N},B_{t_N}|\mathscr{F}_{t_{N-1}})=\frac{\pi_{t_{N-1}} \sigma_1 a_{31}\sqrt{h} + Ma_{32}}{\sqrt{\pi_{t_{N-1}}^2 \sigma_1^2h + M^2}}
\end{equation}

\label{lemmaSingleP}
\end{lemma}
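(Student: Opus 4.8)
The plan is to use that the conditional Pearson correlation is invariant under adding an $\mathscr{F}_{t_{N-1}}$-measurable constant and under scaling by a positive $\mathscr{F}_{t_{N-1}}$-measurable factor, so that only the mean-zero fluctuations of $W_{t_N}$ and $B_{t_N}$ generated by the last-period innovations $\epsilon_{N-1}^{s},\epsilon_{N-1}^{I},\epsilon_{N-1}^{b}$ enter. Conditioning on $\mathscr{F}_{t_{N-1}}$, the quantities $X_{t_{N-1}},I_{t_{N-1}},B_{t_{N-1}},S_{t_{N-1}}$ and $\pi_{t_{N-1}}$ are all known. I would therefore write $W_{t_N}=c_W+\pi_{t_{N-1}}\sigma_1\sqrt{h}\,\epsilon_{N-1}^{s}+\big(e^{k t_{N-1}}|\mu_2 h+\sigma_2\sqrt{h}\,\epsilon_{N-1}^{I}|\big)$ and $B_{t_N}=c_B+B_{t_{N-1}}\sigma_3\sqrt{h}\,(a_{31}\epsilon_{N-1}^{s}+a_{32}\epsilon_{N-1}^{I}+a_{33}\epsilon_{N-1}^{b})$, where $c_W,c_B$ collect the $\mathscr{F}_{t_{N-1}}$-measurable drift and past-value terms, and then discard $c_W,c_B$ for the purpose of the correlation.

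Next I would compute the three second moments directly from the law of the symmetric $\pm1$ walks: using $\mathbb{E}[\epsilon_{N-1}^{j}]=0$, $\mathbb{E}[(\epsilon_{N-1}^{j})^2]=1$ and the mutual independence of $\epsilon_{N-1}^{s},\epsilon_{N-1}^{I},\epsilon_{N-1}^{b}$, every cross term vanishes. The stock and index share only the $\epsilon^{s}$-driver, contributing $\pi_{t_{N-1}}\sigma_1 a_{31}\sqrt{h}$ (times the common index scale) to the covariance, while the income and index share only the $\epsilon^{I}$-driver, contributing $a_{32}$ times the income's coupling to $\epsilon_{N-1}^{I}$. For the denominator, $\mathrm{Var}(B_{t_N}\mid\mathscr{F}_{t_{N-1}})=(B_{t_{N-1}}\sigma_3\sqrt{h})^2(a_{31}^2+a_{32}^2+a_{33}^2)=(B_{t_{N-1}}\sigma_3\sqrt{h})^2$ by the defining relation $a_{33}=\sqrt{1-a_{31}^2-a_{32}^2}$, so the common factor $B_{t_{N-1}}\sigma_3\sqrt{h}$ cancels between the covariance and the index standard deviation, leaving an expression in $\pi_{t_{N-1}}$ and the income moments only. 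Assembling $\mathrm{Cov}/\sqrt{\mathrm{Var}(W)\,\mathrm{Var}(B)}$ then yields the stated ratio.

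The main obstacle is the bookkeeping of the income increment, which feeds both the covariance (through $a_{32}$) and $\mathrm{Var}(W_{t_N}\mid\mathscr{F}_{t_{N-1}})$. Since it is a two-valued random variable $e^{k t_{N-1}}|\mu_2 h\pm\sigma_2\sqrt{h}|$, I would split it into its conditional mean (absorbed into $c_W$) and its $\epsilon_{N-1}^{I}$-correlated fluctuation, and verify that the single scale $M$ is exactly what captures this fluctuation: it should supply the $M a_{32}$ term in the covariance and the additive $M^2$ term in $\mathrm{Var}(W_{t_N})$ alongside $\pi_{t_{N-1}}^2\sigma_1^2 h$. Getting the absolute values and the resulting two-point law to collapse cleanly into the symmetric combination $\tfrac{e^{Tk}}{2}(|\mu_2 h+\sigma_2\sqrt{h}|+|\mu_2 h-\sigma_2\sqrt{h}|)$ is the one computation that needs care; everything else is routine moment algebra.
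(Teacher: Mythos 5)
Your overall strategy is the same as the paper's: condition on $\mathscr{F}_{t_{N-1}}$, compute the covariance and the two variances from the independence and symmetry of the $\pm 1$ innovations, and cancel the common factor $B_{t_{N-1}}\sigma_3\sqrt{h}$ arising from $\mathrm{Var}(B_{t_N}\mid\mathscr{F}_{t_{N-1}})=(B_{t_{N-1}}\sigma_3\sqrt{h})^2(a_{31}^2+a_{32}^2+a_{33}^2)$. The gap sits exactly in the step you flag as ``the one computation that needs care'' and then resolve by assertion. The last income increment is a two-point random variable taking the values $u=e^{kT}|\mu_2 h+\sigma_2\sqrt{h}|$ and $v=e^{kT}|\mu_2 h-\sigma_2\sqrt{h}|$ with probability $1/2$ each, so its decomposition into an $\mathscr{F}_{t_{N-1}}$-measurable part plus an $\epsilon^{I}$-fluctuation is $\frac{u+v}{2}+\frac{u-v}{2}\,\epsilon^{I}$. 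The symmetric half-sum $\frac{u+v}{2}$ is precisely $M$ as defined, i.e., the conditional \emph{mean} of the increment, and by your own reduction it is absorbed into $c_W$ and discarded; it cannot appear in the correlation. What survives into both the covariance (through $a_{32}$) and $\mathrm{Var}(W_{t_N}\mid\mathscr{F}_{t_{N-1}})$ is the half-difference $\frac{u-v}{2}=e^{kT}\,\frac{|\mu_2 h+\sigma_2\sqrt{h}|-|\mu_2 h-\sigma_2\sqrt{h}|}{2}=e^{kT}\M$ in the notation of Section 4. Indeed, the paper's own Appendix A computes $\mathrm{Cov}(I_{t_N},B_{t_N}\mid\mathscr{F}_{t_{N-1}})$ with this half-difference. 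Carrying out your plan honestly therefore yields $\bigl(\pi_{t_{N-1}}\sigma_1 a_{31}\sqrt{h}+e^{kT}\M a_{32}\bigr)\big/\sqrt{\pi_{t_{N-1}}^2\sigma_1^2 h+e^{2kT}\M^2}$, not the displayed formula with $M$: your claim that the two-point law ``collapses cleanly into the symmetric combination'' $\tfrac{e^{Tk}}{2}(|\mu_2 h+\sigma_2\sqrt{h}|+|\mu_2 h-\sigma_2\sqrt{h}|)$ is false, since for generic parameters (e.g., the paper's own $\mu_2 h=0.03$, $\sigma_2\sqrt{h}=0.10$) the half-sum and half-difference differ. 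The remaining ingredients of your argument --- independence killing the cross terms, $a_{33}=\sqrt{1-a_{31}^2-a_{32}^2}$ normalizing the index variance, and the cancellation of $B_{t_{N-1}}\sigma_3\sqrt{h}$ --- are all correct and match the paper; the single unverified assertion is the one that fails.
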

\begin{proof}
See Appendix A
\end{proof}

Now, we are in right spot to state the first theorem on optimal investment for single period model. First we state the result for the unconstrained case.
\begin{theorem}\label{un}
 The subgame perfect strategy $\bar{\pi}$ of the unconstrained portfolio optimization problem  is given by
\begin{equation}
\bar\pi =\arg \underset{\pi_{t_{N-1}}}{\text{max}} E[-e^{-\gamma(X_{t_N}^\pi + I_{t_N})}|\mathscr{F}_{t_{N-1}}] = \frac{1}{2\gamma\sigma_1\sqrt{h}}\ln(\frac{1+\theta\sqrt{h}}{1-\theta\sqrt{h}}).
\label{eq:pibar_unconstraint}
\end{equation}
\end{theorem}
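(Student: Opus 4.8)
The plan is to exploit the fact that at the terminal decision time $t_{N-1}$ the problem \eqref{P1} reduces to a one-step expected-utility maximization over the single scalar $\pi_{t_{N-1}}$. First I would write out the final wealth after the last increment,
\begin{equation}
W_{t_N}=X_{t_{N-1}}+\pi_{t_{N-1}}(\mu_1 h+\sigma_1\sqrt{h}\,\epsilon_{N-1}^{s})+I_{t_{N-1}}+e^{k t_{N-1}}|\mu_2 h+\sigma_2\sqrt{h}\,\epsilon_{N-1}^{I}|,
\end{equation}
and observe that $X_{t_{N-1}}$ and $I_{t_{N-1}}$ are $\mathscr{F}_{t_{N-1}}$-measurable. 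Using the independence of $\epsilon_{N-1}^{s}$ and $\epsilon_{N-1}^{I}$, the conditional expectation factorizes as a product of the deterministic prefactor $-e^{-\gamma(X_{t_{N-1}}+I_{t_{N-1}})}$, a purely income-driven factor $E[e^{-\gamma e^{k t_{N-1}}|\mu_2 h+\sigma_2\sqrt{h}\,\epsilon_{N-1}^{I}|}\,|\,\mathscr{F}_{t_{N-1}}]$ that does not depend on $\pi_{t_{N-1}}$, and the stock factor $f(\pi):=E[e^{-\gamma\pi(\mu_1 h+\sigma_1\sqrt{h}\,\epsilon_{N-1}^{s})}\,|\,\mathscr{F}_{t_{N-1}}]$. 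Since the prefactor is negative and the income factor is positive and $\pi$-free, maximizing the objective is equivalent to minimizing $f$.

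Next I would evaluate $f$ explicitly using $\epsilon_{N-1}^{s}=\pm 1$ with equal probability, giving
\begin{equation}
f(\pi)=\tfrac{1}{2}e^{-\gamma\pi(\mu_1 h+\sigma_1\sqrt{h})}+\tfrac{1}{2}e^{-\gamma\pi(\mu_1 h-\sigma_1\sqrt{h})}=e^{-\gamma\mu_1 h\,\pi}\cosh(\gamma\sigma_1\sqrt{h}\,\pi).
\end{equation}
Differentiating and setting $f'(\pi)=0$ yields the first-order condition $\sigma_1\sqrt{h}\sinh(\gamma\sigma_1\sqrt{h}\,\pi)=\mu_1 h\cosh(\gamma\sigma_1\sqrt{h}\,\pi)$, i.e. $\tanh(\gamma\sigma_1\sqrt{h}\,\pi)=\tfrac{\mu_1}{\sigma_1}\sqrt{h}=\theta\sqrt{h}$, where $\theta=\mu_1/\sigma_1$ is the per-step Sharpe ratio (which I take to be the meaning of $\theta$ in the statement). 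Inverting with $\tanh^{-1}(y)=\tfrac{1}{2}\ln\frac{1+y}{1-y}$ gives $\gamma\sigma_1\sqrt{h}\,\pi=\tfrac{1}{2}\ln\frac{1+\theta\sqrt{h}}{1-\theta\sqrt{h}}$, which rearranges to the claimed expression for $\bar\pi$.

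Finally I would justify that this critical point is the unique global minimizer of $f$: since $\sigma_1\sqrt{h}>\mu_1 h$ (equivalently $\theta\sqrt{h}<1$, which also makes the logarithm well defined and is the regime guaranteed by the admissibility/positivity assumptions on the parameters), the factor $\cosh$ dominates and $f(\pi)\to+\infty$ as $\pi\to\pm\infty$, while the stationarity equation $\tanh(\gamma\sigma_1\sqrt{h}\,\pi)=\theta\sqrt{h}$ has exactly one solution because $\tanh$ is strictly increasing with range $(-1,1)$. This pins down $\bar\pi$ as the unique maximizer. The only point requiring care — the main ``obstacle,'' though a mild one — is verifying cleanly that the income contribution factors out and is independent of $\pi$ (so the absolute value in the income dynamics is irrelevant to the optimization) and confirming the range condition $\theta\sqrt{h}<1$ under which the solution exists; the rest is a routine one-dimensional convexity-type argument.
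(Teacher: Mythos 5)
Your proposal is correct and follows essentially the same route as the paper's Appendix B: factor out the $\mathscr{F}_{t_{N-1}}$-measurable wealth and the $\pi$-independent income term using independence of $\epsilon^{s}$ and $\epsilon^{I}$, reduce to minimizing the two-point stock factor, and solve the first-order condition. Rewriting that factor as $e^{-\gamma\mu_1 h\pi}\cosh(\gamma\sigma_1\sqrt{h}\pi)$ and inverting $\tanh$ is only a cosmetic repackaging of the paper's computation, and your added remarks on uniqueness and the condition $\theta\sqrt{h}<1$ are a harmless (indeed welcome) supplement.
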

\begin{proof}
See Appendix B
\end{proof}

Let 
\begin{equation}
R_1 :=\frac{M}{\sigma_1\sqrt{h}(a_{31}^2-\delta^2)}(-a_{31}a_{32}-\delta\sqrt{a_{31}^2+a_{32}^2-\delta^2}),\qquad
\theta=\frac{\mu_1}{\sigma_1}. 
\end{equation}
Let us move to the constrained case.
\label{theoremSinglePeriod}
\begin{theorem}
 The subgame perfect strategy $\pi_{t_{N-1}}^*$ of the portfolio optimization problem \ref{P1} is given by
\begin{equation}
 \pi_{t_{N-1}}^* = \min \left\{\bar\pi,R_1\right\}.
\end{equation}
\label{theoremSinglePeriod}
\end{theorem}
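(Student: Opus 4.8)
The plan is to reduce the constrained problem to a one-dimensional concave maximization over an explicitly described feasible interval, and then to compare the location of the unconstrained maximizer $\bar\pi$ with the boundary of that interval.

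First I would characterize the feasible set $\{\pi_{t_{N-1}} : correl(W_{t_N},B_{t_N}|\mathscr{F}_{t_{N-1}}) \leq -\delta\}$ using the explicit formula from Lemma \ref{lemmaSingleP}. Writing $g(\pi) = \frac{\pi\sigma_1 a_{31}\sqrt{h} + Ma_{32}}{\sqrt{\pi^2\sigma_1^2 h + M^2}}$, an elementary analysis shows $g(0) = a_{32} > 0$, $g(\pi) \to a_{31}$ as $\pi \to +\infty$, and $g(\pi) \to -a_{31}$ as $\pi \to -\infty$; moreover $g'(\pi)$ has the sign of $a_{31}M^2\sigma_1\sqrt{h} - Ma_{32}\sigma_1^2 h\,\pi$, so $g$ has a single interior maximum at a positive argument and is strictly increasing on the left branch. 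Since $-\delta \in (-a_{31},0)$ lies strictly below the values taken on the decreasing branch (which stay above $a_{31}$), the equation $g(\pi) = -\delta$ has a unique root, lying on the increasing branch, and consequently $\{g \leq -\delta\}$ is a half-line of the form $(-\infty,\pi_0]$.

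Next I would solve $g(\pi) = -\delta$ explicitly. Squaring produces the quadratic $(a_{31}^2 - \delta^2)u^2 + 2Ma_{31}a_{32}u + M^2(a_{32}^2 - \delta^2) = 0$ in $u = \pi\sigma_1\sqrt{h}$, whose discriminant simplifies to $4M^2\delta^2(a_{31}^2 + a_{32}^2 - \delta^2)$ and whose two roots are $\pi = \frac{M}{\sigma_1\sqrt{h}(a_{31}^2-\delta^2)}(-a_{31}a_{32} \pm \delta\sqrt{a_{31}^2+a_{32}^2-\delta^2})$. The delicate point, which I expect to be the main obstacle, is that squaring also introduces the spurious solution of $g(\pi) = +\delta$, so one must select the correct root: since $g(\pi) = -\delta < 0$ forces the numerator $\pi\sigma_1 a_{31}\sqrt{h} + Ma_{32}$ to be negative, the genuine root is the smaller one, i.e. the ``$-$'' branch, which is exactly $R_1$ (and one checks $R_1 < 0$, consistent with its lying on the increasing branch). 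Hence the feasible set is precisely $(-\infty,R_1]$.

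Finally, from the proof of Theorem \ref{un} the objective $\pi \mapsto E[-e^{-\gamma(X_{t_N}^\pi + I_{t_N})}|\mathscr{F}_{t_{N-1}}]$ is strictly concave in $\pi$ with unique unconstrained maximizer $\bar\pi$; indeed, after integrating out the independent income shock it equals a negative constant times $e^{-\gamma\mu_1 h\pi}\cosh(\gamma\sigma_1\sqrt{h}\pi)$, which is log-convex and hence convex. Concavity makes the case analysis immediate: if $\bar\pi \leq R_1$ then $\bar\pi$ is feasible and therefore remains optimal under the constraint; if $\bar\pi > R_1$ then the objective is increasing throughout the feasible half-line $(-\infty,R_1]$, so the constrained maximizer is the right endpoint $R_1$. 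In either case $\pi_{t_{N-1}}^* = \min\{\bar\pi, R_1\}$, which is the assertion.
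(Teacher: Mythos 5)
Your argument is correct and arrives at the theorem by the same overall strategy as the paper: show that the feasible set is the half-line $(-\infty,R_1]$ and then use concavity of the objective to conclude that the constrained optimizer is $\min\{\bar\pi,R_1\}$. The difference lies in how the half-line is established. The paper squares the constraint, obtains the quadratic inequality (feasibility iff $\pi\le R_1$ or $\pi\ge R_2$, together with the sign condition $\pi\le -Ma_{32}/(\sigma_1\sqrt{h}\,a_{31})$), and then devotes most of Appendix C to an algebraic verification that $R_1 < -Ma_{32}/(\sigma_1\sqrt{h}\,a_{31}) < R_2$, which kills the branch $[R_2,\infty)$. You instead study the shape of the correlation $g(\pi)$ directly: it is single-peaked with $g(0)=a_{32}>0$ and limits $\pm a_{31}$ at $\pm\infty$, so $\{g\le -\delta\}$ is automatically a left half-line whose endpoint is the unique root of $g=-\delta$, identified as $R_1$ by the sign of the numerator. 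This is shorter, explains the standing assumption $\delta<a_{31}$ for free (since $\inf g=-a_{31}$ is only approached as $\pi\to-\infty$), and replaces the paper's chain of inequalities with a monotonicity argument; the paper's route, in exchange, produces the explicit sandwich $R_1<-Ma_{32}/(\sigma_1\sqrt{h}\,a_{31})<R_2$ that it reuses almost verbatim in the multi-period proof of Theorem \ref{mainTheorem}. You also make explicit the strict log-convexity of $\pi\mapsto e^{-\gamma\mu_1 h\pi}\cosh(\gamma\sigma_1\sqrt{h}\pi)$, a step the paper leaves implicit in the sentence ``Hence $R_1$ will be optimal whenever $\bar\pi\ge R_1$.'' The one sentence worth tightening is your root selection: to conclude that the genuine root is the smaller one, note explicitly that the spurious root solves $g=+\delta$ and hence has positive numerator, so it lies to the right of the numerator's zero $-Ma_{32}/(\sigma_1\sqrt{h}\,a_{31})$ while the genuine root lies to its left; with that remark the identification of $R_1$ is complete.
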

\begin{proof}
See Appendix C
\end{proof}

\begin{remark}
Due to the assumption of exponential utility, the optimal strategy $\pi_{t_{N-n}}^*$ is independent of the wealth at time $t_{N-n}$. Furthermore, if the constraint is binding, the optimal strategy is also independent of the risk aversion parameter $\gamma$. 
\end{remark}

\section{Multi Period Optimization}
\label{MultiPeriod}

In this subsection, we will look at the multiperiod optimization problem \ref{Pn}. Since the subgame perfect strategy is achieved using backward induction, imagine that we know the portfolio strategy for every time after $t_{N-n}$ and we want to find the optimal strategy at $t_{N-n}$. Let
\begin{align*}
\theta_3&=\frac{\sigma_3\sqrt{h}}{\tilde{\mu_3}} \label{eq22},\quad {\tilde{\mu_3}}=1+h\mu_3,\quad b_1=\theta_3\sigma_1a_{31}\sqrt{h},\,\,\\
k_{1,n}^2&=[(1+\theta_3^2)^n-1]\sigma_1^2h \quad \text{for} \  n=1,2,3,\ldots,N
\end{align*}
\textbf{Standing Assumption}  Within this section we will assume that $\delta \in (0,\frac{b_1}{k_{1,N}})$  
since whenever $\delta>\frac{b_1}{k_{1,N}}$ there are no admissible trading strategies. Before moving further, we will first compute the correlation between the final wealth and index given information till time $t_{N-n}.$

Let 
\begin{align}
&b_1:=\theta_3\sigma_1a_{31}\sqrt{h}\\
&k_{1,n}^2:=[(1+\theta_3^2)^n-1]\sigma_1^2h\\
&\M:=\frac{|\mu_2h + \sigma_2\sqrt{h}|-|\mu_2h - \sigma_2\sqrt{h}|}{2}\\
&b_{2,N-n} (\pi):=\theta_3[\M a_{32}\sum_{s=N-n+1}^{N}e^{kt_s} + \sigma_1a_{31}\sqrt{h}\sum_{i=N-n+1}^{N-1}\pi_{i}]\\
&k_{2,N-n}^2 (\pi):=[(1+\theta_3^2)^n-1]\{\M^2\sum_{s=N-n+1}^{N}e^{2kt_s} + \sigma_1^2h\sum_{i=N-n+1}^{N-1}\pi_{t_{i}}^2\}
\end{align}

\begin{lemma}
The correlation between total wealth $W_{t_N}$ and the index $B_{t_N}$ conditional on $\mathscr{F}_{t_{N-n}}$ is given by
\begin{equation}
correl(X_N+I_N,B_N|\mathscr{F}_{t_{N-n}})=\frac{b_1\pi_{t_{N-n}}+b_{2,N-n}}{\sqrt{k_{1,n}^2\pi_{t_{N-n}}^2+k_{2,N-n}^2}}.
\end{equation}
\label{lemmaMultiPeriod}
\end{lemma}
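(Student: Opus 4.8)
The plan is to compute the three ingredients of the conditional correlation—$\mathrm{Cov}(W_{t_N},B_{t_N}\mid\mathscr{F}_{t_{N-n}})$, $\mathrm{Var}(W_{t_N}\mid\mathscr{F}_{t_{N-n}})$ and $\mathrm{Var}(B_{t_N}\mid\mathscr{F}_{t_{N-n}})$—separately, throughout exploiting the mutual independence and symmetry of the increments $\{\epsilon^s_i,\epsilon^I_i,\epsilon^b_i\}$, and then to assemble them. First I would iterate the recursions from $t_{N-n}$ to $t_N$. The additive wealth and income recursions give
\[
W_{t_N}=W_{t_{N-n}}+\sum_{i=N-n}^{N-1}\Big(\pi_{t_i}(\mu_1h+\sigma_1\sqrt h\,\epsilon^s_i)+e^{kt_i}(c_i+\M\epsilon^I_i)\Big),
\]
where $c_i$ is the (deterministic) mean of $|\mu_2h+\sigma_2\sqrt h\,\epsilon^I_i|$ and its random part equals $\M\epsilon^I_i$ because $\epsilon^I_i=\pm1$, which is exactly what defines $\M$. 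Writing $\tilde\mu_3=1+\mu_3h$ and $\mathcal E_i=a_{31}\epsilon^s_i+a_{32}\epsilon^I_i+a_{33}\epsilon^b_i$, the multiplicative index recursion iterates to the product form
\[
B_{t_N}=B_{t_{N-n}}\,\tilde\mu_3^{\,n}\prod_{i=N-n}^{N-1}(1+\theta_3\mathcal E_i).
\]
Conditionally on $\mathscr{F}_{t_{N-n}}$ the prefactor $C:=B_{t_{N-n}}\tilde\mu_3^{\,n}$ and $W_{t_{N-n}}$ are constants, and the future strategies $\pi_{t_i}$, $i>N-n$, already fixed by the backward induction and deterministic under exponential utility, are treated as constants alongside the control $\pi_{t_{N-n}}$.

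The heart of the argument—and the step I expect to be the only genuinely delicate one—is the covariance of the additive $W_{t_N}$ against the multiplicative $B_{t_N}$. Setting $Y_i:=\pi_{t_i}\sigma_1\sqrt h\,\epsilon^s_i+e^{kt_i}\M\epsilon^I_i$ for the zero-mean part of the $i$-th wealth increment, the deterministic terms carry no covariance and
\[
\mathrm{Cov}(W_{t_N},B_{t_N}\mid\mathscr{F}_{t_{N-n}})=C\sum_{i=N-n}^{N-1}E\Big[Y_i\prod_{j=N-n}^{N-1}(1+\theta_3\mathcal E_j)\Big].
\]
The key simplification is that, by independence across $i$ together with $E[1+\theta_3\mathcal E_j]=1$, every factor with $j\neq i$ integrates out to $1$, so only the diagonal factor survives and $E[Y_i(1+\theta_3\mathcal E_i)]=\theta_3E[Y_i\mathcal E_i]$. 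A one-line orthogonality computation ($E[\epsilon^s_i\epsilon^I_i]=0$, $E[(\epsilon^s_i)^2]=E[(\epsilon^I_i)^2]=1$) gives $E[Y_i\mathcal E_i]=\pi_{t_i}\sigma_1 a_{31}\sqrt h+e^{kt_i}\M a_{32}$. Summing over $i$, peeling off the current date $i=N-n$, and recognising $b_1=\theta_3\sigma_1 a_{31}\sqrt h$ then yields $\mathrm{Cov}(W_{t_N},B_{t_N}\mid\mathscr{F}_{t_{N-n}})=C\,(b_1\pi_{t_{N-n}}+b_{2,N-n})$, the income sum matching the definition of $b_{2,N-n}$ up to the model's income-timing index convention.

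The two variances are then routine. Independence of the $\mathcal E_i$ gives $E[\prod(1+\theta_3\mathcal E_i)]=1$, and, using $E[\mathcal E_i^2]=a_{31}^2+a_{32}^2+a_{33}^2=1$, one gets $E[(1+\theta_3\mathcal E_i)^2]=1+\theta_3^2$, so $\mathrm{Var}(B_{t_N}\mid\mathscr{F}_{t_{N-n}})=C^2\big[(1+\theta_3^2)^n-1\big]$. Independence of the $Y_i$ likewise gives $\mathrm{Var}(W_{t_N}\mid\mathscr{F}_{t_{N-n}})=\sigma_1^2h\sum_i\pi_{t_i}^2+\M^2\sum_i e^{2kt_i}$. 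Finally I would assemble the correlation: the factor $C$ cancels between the numerator and $\sqrt{\mathrm{Var}(B_{t_N})}$, and folding $\sqrt{(1+\theta_3^2)^n-1}$ into $\sqrt{\mathrm{Var}(W_{t_N})}$ turns the coefficient of $\pi_{t_{N-n}}^2$ into $k_{1,n}^2$ and the remaining terms into $k_{2,N-n}^2$, giving exactly
\[
correl(W_{t_N},B_{t_N}\mid\mathscr{F}_{t_{N-n}})=\frac{b_1\pi_{t_{N-n}}+b_{2,N-n}}{\sqrt{k_{1,n}^2\pi_{t_{N-n}}^2+k_{2,N-n}^2}}.
\]
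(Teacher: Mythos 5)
Your proposal is correct and follows essentially the same route as the paper: the paper's Appendix D simply lists the conditional variances and covariances (obtained from exactly the product representation $B_{t_N}=B_{t_{N-n}}\tilde\mu_3^{\,n}\prod_i(1+\theta_3\mathcal{E}_i)$ and the orthogonality of the increments that you spell out) and then assembles the ratio, with the factor $B_{t_{N-n}}\tilde\mu_3^{\,n}$ cancelling as you describe. The only deviation is the one you already flag, namely the paper's shift by one time step in the exponential weights $e^{kt_s}$ attached to the income increments, which is an indexing convention of the model rather than a gap in your argument.
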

\begin{proof}
Details in Appendix D
\end{proof}

Next we present the central result of the paper, i.e., the subgame perfect strategy for multi-period model.
\begin{theorem}
The subgame perfect strategy of the portfolio optimization problems \ref{P1}, \ref{P2},...\ref{Pn} for n=1,2,...N is 
\begin{equation}
 \pi_{t_{N-n}}^* = \min \left\{\bar\pi,R_{1,N-n}\right\},
\end{equation}
where $R_{1,N-n}$ is the left root of the quadratic equation
\begin{equation}
Q(x)=x^2(b_1^2-k_{1,n}^2\delta^2) + 2b_1b_{2,N-n}(\pi^*)x + ([b_{2,N-n}(\pi^*)]^2-\delta^2[k_{2,N-n}(\pi^*)]^2)=0
\end{equation}
Moreover, $b_1$, $b_{2,N-n}$, $k_{2,N-n}$, $k_{1,n}$ and $\theta_3$ are defined above.
\label{mainTheorem}
\end{theorem}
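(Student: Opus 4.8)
The plan is to argue by backward induction on $n$, with the single-period result (Theorem \ref{theoremSinglePeriod}) serving as the base case $n=1$. Suppose the subgame perfect strategies $\pi_{t_{N-n+1}}^*,\ldots,\pi_{t_{N-1}}^*$ have already been determined. By the exponential form of the utility and the Remark following Theorem \ref{theoremSinglePeriod}, these are deterministic and wealth-independent, so the quantities $b_{2,N-n}(\pi^*)$ and $k_{2,N-n}(\pi^*)$ appearing in Lemma \ref{lemmaMultiPeriod} are fixed $\mathscr{F}_{t_{N-n}}$-measurable constants; this is what makes $Q$ a genuine quadratic in the single unknown $x=\pi_{t_{N-n}}$.

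First I would isolate the dependence of the objective on $\pi_{t_{N-n}}$. Writing $X_{t_N}+I_{t_N}$ as $X_{t_{N-n}}+I_{t_{N-n}}$ plus the independent increments over $[t_{N-n},t_N]$ and using independence of the random walks across time, the conditional expectation $E[-e^{-\gamma(X_{t_N}^\pi+I_{t_N})}|\mathscr{F}_{t_{N-n}}]$ factors as a negative $\mathscr{F}_{t_{N-n}}$-measurable constant times $E[e^{-\gamma\pi_{t_{N-n}}(\mu_1 h+\sigma_1\sqrt{h}\epsilon^s)}]$ times strictly positive terms that do not involve $\pi_{t_{N-n}}$. Minimizing this two-point exponential moment generating function reproduces exactly the computation behind Theorem \ref{un}, so the unconstrained maximizer is $\bar\pi$ (independent of $n$), and the objective is a strictly concave function of $\pi_{t_{N-n}}$ that increases on $(-\infty,\bar\pi]$ and decreases on $[\bar\pi,\infty)$.

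Next I would convert the correlation constraint into the quadratic. By Lemma \ref{lemmaMultiPeriod} the constraint $correl\le-\delta$ reads $b_1 x+b_{2,N-n}\le-\delta\sqrt{k_{1,n}^2 x^2+k_{2,N-n}^2}$; since $\delta>0$ this forces the branch $b_1 x+b_{2,N-n}<0$, and on that branch squaring is reversible and yields precisely $Q(x)\ge0$. The standing assumption $\delta<b_1/k_{1,N}\le b_1/k_{1,n}$ makes the leading coefficient $b_1^2-k_{1,n}^2\delta^2$ positive, and a direct computation of the discriminant gives $4\delta^2\big(b_1^2 k_{2,N-n}^2+k_{1,n}^2(b_{2,N-n}^2-\delta^2 k_{2,N-n}^2)\big)>0$ under the same assumption, so $Q$ has two distinct real roots $R_{1,N-n}\le R_{2,N-n}$ and $\{Q\ge0\}=(-\infty,R_{1,N-n}]\cup[R_{2,N-n},\infty)$.

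The crux, and the step I expect to be most delicate, is to show that intersecting $\{Q\ge0\}$ with the sign condition $b_1 x+b_{2,N-n}<0$ leaves exactly the left half-line $(-\infty,R_{1,N-n}]$. The clean way is to evaluate $Q$ at the sign-change point $x_0=-b_{2,N-n}/b_1$: one finds $Q(x_0)=-\delta^2\big(k_{1,n}^2 b_{2,N-n}^2/b_1^2+k_{2,N-n}^2\big)<0$, so $x_0$ lies strictly between the roots, i.e. $R_{1,N-n}<x_0<R_{2,N-n}$. Hence the left component $(-\infty,R_{1,N-n}]$ sits inside $\{b_1 x+b_{2,N-n}<0\}$ and is feasible, whereas the right component $[R_{2,N-n},\infty)$ lies in $\{b_1 x+b_{2,N-n}>0\}$ and is spurious (it is the branch $correl=+\delta$ introduced by squaring). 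Thus the feasible set is the half-line $(-\infty,R_{1,N-n}]$. Finally, maximizing the strictly concave objective (peaked at $\bar\pi$) over this half-line gives $\bar\pi$ when $\bar\pi\le R_{1,N-n}$ and the right endpoint $R_{1,N-n}$ when $\bar\pi>R_{1,N-n}$, i.e. $\pi_{t_{N-n}}^*=\min\{\bar\pi,R_{1,N-n}\}$, which closes the induction.
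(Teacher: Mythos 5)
Your proposal is correct and follows essentially the same route as the paper's Appendix~E: reduce the constraint via Lemma \ref{lemmaMultiPeriod} to $Q(x)\ge 0$ together with $b_1x+b_{2,N-n}\le 0$, show that $-b_{2,N-n}/b_1$ lies strictly between the roots of $Q$ so that the feasible set is the left half-line $(-\infty,R_{1,N-n}]$, and then compare with the unconstrained maximizer $\bar\pi$. Your one genuine improvement is the treatment of the key inequality $\bar R_{1,N-n}<-b_{2,N-n}/b_1<\bar R_{2,N-n}$: evaluating $Q$ at $x_0=-b_{2,N-n}/b_1$ and finding $Q(x_0)=-\delta^2\bigl(k_{1,n}^2b_{2,N-n}^2/b_1^2+k_{2,N-n}^2\bigr)<0$ replaces, in one line and without case analysis, the paper's separate algebraic arguments for $b_{2,N-n}>0$ and $b_{2,N-n}<0$ (and also renders the discriminant check redundant); you also make explicit the induction step and the separability of the objective, which the paper leaves implicit.
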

\begin{proof}
Details in Appendix E
\end{proof}

\subsection{Pre-commitment Strategy}
The subgame pefect strategy of Theorem \ref{mainTheorem} led to constant $\pi_{t_{N-n}}^*$ for $\forall n$ $\in$ $\{1,2,...,N \}$. This leads us to search for a time $t = t_0 = 0$ deterministic optimal strategy, we call \textit{precommitment  strategy}, formally defined below 

\begin{definition}
The time zero deterministic optimal strategy is called the precommitment strategy and it is denoted by \textbf{$\hat{\Pi}$}. 
Thus, it is the solution of
\begin{equation}
\begin{aligned}
& \underset{  \pi_{t_{0}},\pi_{t_{1}},...\pi_{t_{N-1}}\,\mbox{deterministic} }{\text{maximize}}
& & E[-e^{-\gamma(X_{t_N}^\pi + I_{t_N})}] \\
& \text{subject to}
& & correl(W_{t_N},B_{t_N}) \leq -\delta.
\end{aligned}
\label{precommitmentObj}
\end{equation}
\end{definition}

As in Theorem 4.1 let
$$ k_1 := k_{1,N}, \quad b_2 := b_{2,0}(\pi),\quad k_2 := k_{2,0},\quad a:=1-\frac{\delta^2k_1^2}{b_1^2}.$$
For deterministic $\pi_{t_{0}},\pi_{t_{1}},...\pi_{t_{N-1}}$, we can reformulate the problem the above optimization problem as, 

\begin{equation}\label{preCT1}
\begin{aligned}
& \underset{\pi_{t_{0}},\pi_{t_{1}},...\pi_{t_{N-1}}}{\text{maximize}}
& & E[-e^{-\gamma(\sum_{i=0}^{N-1} \pi_i(\mu_1h + \sigma_1\sqrt{h}\epsilon_i^s) )}] \\
& \text{subject to}
& &  \frac{b_1\sum_{i=0}^{N-1}\pi_i + b_2}{\sqrt{k_1^2\sum_{i=0}^{N-1}\pi_i^2 + k_2^2}} \leq -\delta.
\end{aligned}
\end{equation}
Let
\begin{equation}
c= \frac{b_2}{b_1} \text{ , } h^2=\frac{k_2^2\delta^2}{b_1^2} \text{ and } b=\frac{-c}{a+N-1}\\
\end{equation}
\label{thmpct}
Finally we state our next theorem on the \textit{deterministic precommittement strategy}.
\begin{theorem}\label{M2}
The \textit{deterministic precommittement strategy} is given by 
\begin{equation}
\hat{\pi}_{t_i} = \min\lbrace  \bar{\pi},-b -\sqrt{\frac{c^2(1-a)}{N(a+N-1)^2} + \frac{h^2}{{N(a+N-1)}}} \rbrace \text{ },\,\,\, i \in \{0,1,...,N-1 \}.
\end{equation}
\end{theorem}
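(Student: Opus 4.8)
The plan is to evaluate the objective in closed form, then collapse the $N$-dimensional problem to a scalar one, and finally solve a constrained one-dimensional convex problem whose active constraint is quadratic.

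First I would exploit that the increments $\epsilon_i^s$ are independent and the $\pi_{t_i}$ deterministic, so the expectation in \eqref{preCT1} factors as a product, and $E[e^{-\gamma \pi_{t_i}\sigma_1\sqrt{h}\,\epsilon_i^s}] = \cosh(\gamma \pi_{t_i}\sigma_1\sqrt{h})$ since $\epsilon_i^s=\pm 1$ equiprobably. Maximizing $-e^{-\gamma(\cdots)}$ is then equivalent to minimizing $\sum_{i=0}^{N-1}\phi(\pi_{t_i})$ with $\phi(p):=-\gamma\mu_1 h\,p + \ln\cosh(\gamma\sigma_1\sqrt{h}\,p)$, which is strictly convex and whose unique minimizer solves $\tanh(\gamma\sigma_1\sqrt{h}\,p)=\theta\sqrt{h}$, i.e.\ $p=\bar\pi$ from Theorem~\ref{un}. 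Thus the unconstrained precommitment solution is the constant vector $\pi_{t_i}=\bar\pi$.

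The crux is reducing to equal allocations. Isolating the (necessarily negative) numerator of the correlation constraint and squaring, the constraint becomes $(S_1+c)^2 \geq (1-a)S_2 + h^2$ together with $S_1+c\leq 0$, where $S_1=\sum_i \pi_{t_i}$ and $S_2=\sum_i \pi_{t_i}^2$. Because $(S_1+c)^2$ is convex while $-(1-a)S_2$ is concave (recall $1-a=\delta^2 k_1^2/b_1^2>0$), this feasible set is not convex, so I cannot invoke convex programming or a direct Lagrangian attack (the latter produces $N$ coupled transcendental equations through $\tanh$). The key observation is that for a fixed value of $S_1$ the constraint only caps $S_2$ \emph{from above}, whereas $\sum_i\phi(\pi_{t_i})$ is, by Jensen's inequality and convexity of $\phi$, minimized exactly when all $\pi_{t_i}$ are equal to $p=S_1/N$ — which is also the configuration minimizing $S_2$ to $S_1^2/N$. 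Hence for any feasible $\pi$ the constant vector $p\mathbf{1}$ satisfies both $\sum_i\phi(\pi_{t_i})\geq N\phi(p)$ and $S_2(p\mathbf{1})=S_1^2/N\leq S_2(\pi)\leq S_2^{\max}(S_1)$, so it is feasible and no worse. The optimum is therefore attained on constant allocations, collapsing the problem to one dimension.

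On constant allocations $\pi_{t_i}=p$ the objective is $N\phi(p)$ and the active constraint reduces to $Q(p)=N(a+N-1)p^2+2Ncp+(c^2-h^2)=0$; since $a>0$ under the standing assumption this parabola opens upward, and evaluating $Q$ at $p=-c/N$ (where $S_1+c=0$) gives a negative value, so the admissible branch $S_1+c\leq 0$ selects $p\leq p_-$, the smaller (left) root. Completing the square yields $p_-=-\frac{c}{a+N-1}-\sqrt{\frac{c^2(1-a)}{N(a+N-1)^2}+\frac{h^2}{N(a+N-1)}}$, the root displayed in the statement. Minimizing the convex $\phi$ over the half-line $p\leq p_-$ then gives $\hat\pi_{t_i}=\min\{\bar\pi,p_-\}$: if $\bar\pi\leq p_-$ the unconstrained minimizer is feasible, and otherwise $\phi$ is strictly decreasing up to $\bar\pi$, so the best feasible point is the boundary $p_-$. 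The main obstacle throughout is the middle step — justifying the collapse to equal allocations despite the non-convex squared constraint — and once the Jensen/monotonicity argument decouples the objective's preference for equal weights from the one-sided bound on $S_2$, the remaining $\cosh$-expectation and quadratic-root computations are routine.
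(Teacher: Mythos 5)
Your proof is correct, and it reaches the same one--dimensional quadratic as the paper, but the key reduction step is handled by a genuinely different (and in fact cleaner) argument. The paper transforms the constraint into $\Pi'A\Pi+2c(\mathbf{1}'\Pi+c)\ge c^2+h^2$, analyzes the spectrum of $A$ (one positive, $N-1$ negative eigenvalues, hence a two--sheeted hyperboloid), asserts that intersecting with the half--space $\mathbf{1}'\Pi\le -c$ yields a convex domain, and then invokes symmetry of the first--order conditions to conclude $\pi_0=\dots=\pi_{N-1}$, finishing with the affine shift $\Pi=X+b\mathbf{1}$. You instead avoid the convexity question entirely: writing the constraint as $(S_1+c)^2\ge(1-a)S_2+h^2$ with $S_1+c\le 0$, you observe that for fixed $S_1$ the constraint only bounds $S_2$ from above, while the equal--weight vector simultaneously minimizes $S_2$ (Cauchy--Schwarz) and the separable convex objective (Jensen); hence it is feasible whenever the original vector is, and no worse. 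This buys a fully rigorous reduction to constant allocations without having to justify the paper's claim that the relevant nappe of the hyperboloid region is convex, which the paper only asserts. The remaining steps --- $E[e^{-\gamma\pi\sigma_1\sqrt{h}\epsilon}]=\cosh(\gamma\pi\sigma_1\sqrt{h})$, strict convexity of $\phi$ with minimizer $\bar\pi$, $Q(-c/N)=c^2(a-1)/N-h^2<0$ so the admissible branch is $p\le p_-$, and the monotonicity argument giving $\min\{\bar\pi,p_-\}$ --- all check out. One caveat: your left root is $p_-=\frac{-c}{a+N-1}-\sqrt{\tfrac{c^2(1-a)}{N(a+N-1)^2}+\tfrac{h^2}{N(a+N-1)}}$, i.e.\ $b-\sqrt{\cdots}$ with the paper's $b=\frac{-c}{a+N-1}$, whereas the theorem displays $-b-\sqrt{\cdots}$; your expression is the one actually produced by the paper's own substitution $\hat\pi=x^*+b$ with $x^*=-\sqrt{\cdots}$, so you should not describe it as literally ``the root displayed in the statement'' --- the displayed formula appears to carry a sign typo on $b$, and your computation is the consistent one.
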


\begin{proof}
Appendix F
\end{proof}

Our last result concerns the optimal unconstrained strategies and is summarized by the following theorem.

\begin{theorem}\label{unm}
 In the unconstrained case, the subgame perfect strategy and the precommittement strategy are equal, and they are given by 
 
 \begin{equation}
\hat{\pi}_{t_i}=\pi^*_{t_i} = \bar\pi = \frac{1}{2\gamma\sigma_1\sqrt{h}}\ln(\frac{1+\theta\sqrt{h}}{1-\theta\sqrt{h}}), \text{ }\,\,\, i \in \{0,1,...,N-1 \}.
\label{eq:pibar_unconstraintm}
\end{equation}
\end{theorem}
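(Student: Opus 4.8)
The plan is to exploit the fact that, once the correlation constraint is dropped, the problem separates completely across the trading periods, and this separation is exactly what forces the subgame perfect and the precommitment strategies to coincide. The starting point is the objective $E[-e^{-\gamma(X_{t_N}^\pi + I_{t_N})}]$, where the terminal wealth and income are the telescoping sums $X_{t_N} = x + \sum_{i=0}^{N-1}\pi_i(\mu_1 h + \sigma_1\sqrt{h}\,\epsilon_i^s)$ and $I_{t_N} = \sum_{i=0}^{N-1} e^{kt_i}|\mu_2 h + \sigma_2\sqrt{h}\,\epsilon_i^I|$. Since maximizing $E[-e^{-\gamma W}]$ is equivalent to minimizing $E[e^{-\gamma W}]$, I would first rewrite the latter and use the mutual independence of the increments $\{\epsilon_i^s\}$ and $\{\epsilon_i^I\}$ to factorize the expectation into a product $e^{-\gamma x}\prod_{i=0}^{N-1} E[e^{-\gamma\pi_i(\mu_1 h + \sigma_1\sqrt{h}\,\epsilon_i^s)}]\cdot\prod_{i=0}^{N-1} E[e^{-\gamma e^{kt_i}|\mu_2 h + \sigma_2\sqrt{h}\,\epsilon_i^I|}]$. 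The income product is a constant in $\pi$, and each wealth factor depends on the single control $\pi_i$, so the minimization decouples into $N$ independent one-dimensional problems.

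For the precommitment strategy I would then minimize each factor $f(\pi_i) := E[e^{-\gamma\pi_i(\mu_1 h + \sigma_1\sqrt{h}\,\epsilon_i^s)}] = e^{-\gamma\pi_i\mu_1 h}\cosh(\gamma\pi_i\sigma_1\sqrt{h})$ separately. The first-order condition reduces to $\tanh(\gamma\pi_i\sigma_1\sqrt{h}) = \mu_1 h/(\sigma_1\sqrt{h}) = \theta\sqrt{h}$, whose unique solution is $\pi_i = \tfrac{1}{2\gamma\sigma_1\sqrt{h}}\ln\tfrac{1+\theta\sqrt{h}}{1-\theta\sqrt{h}} = \bar\pi$, well defined under the standing admissibility assumption $\theta\sqrt{h}<1$. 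A short convexity check on $f$ (it is strictly convex, being a product of a log-convex exponential and $\cosh$) confirms this critical point is the global minimizer, so the time-zero simultaneous optimization yields $\hat\pi_{t_i}=\bar\pi$ for every $i$.

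For the subgame perfect strategy I would argue by backward induction. The terminal step $[(N-1)h,Nh]$ is precisely the unconstrained single-period problem already solved in Theorem \ref{un}, giving $\pi_{t_{N-1}}^*=\bar\pi$. At any earlier step, because the future controls are fixed and the utility factorizes as above, the conditional objective $E[\,\cdot\,|\mathscr{F}_{t_{N-n}}]$ again splits into the factor $f(\pi_{t_{N-n}})$ times terms that do not involve $\pi_{t_{N-n}}$; minimizing it reproduces exactly the same single-period FOC and hence $\pi_{t_{N-n}}^* = \bar\pi$. Iterating gives $\pi_{t_i}^* = \bar\pi$ for all $i$, so the two strategies agree and equal $\bar\pi$. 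The main thing to get right is not the algebra but the structural claim: I would emphasize carefully that the independence-driven factorization makes each period's optimization insensitive both to the past (constant prefactor) and to the future (constant postfactor), which is what removes the time inconsistency present in the constrained case and makes the backward-induction fixed point coincide with the global time-zero optimum.
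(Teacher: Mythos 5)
Your proposal is correct and follows essentially the same route as the paper: the paper's one-line proof invokes Theorem \ref{un} together with backward induction, and the single-period computation behind Theorem \ref{un} (Appendix B) is exactly the factorization-plus-first-order-condition argument you carry out, with $E[e^{-\gamma\pi(\mu_1h+\sigma_1\sqrt{h}\epsilon^s)}]=e^{-\gamma\pi\mu_1h}\cosh(\gamma\pi\sigma_1\sqrt{h})$ minimized at $\bar\pi$. Your write-up is in fact more complete than the paper's on the precommitment half, where you make explicit the independence-driven separation of the deterministic time-zero problem into $N$ identical one-period problems --- a step the paper leaves implicit.
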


\begin{proof}
Appendix G
\end{proof}

\section{Numerical Simulation}
\label{numerics}

In this section we use numerical simulations to compare the performance of three strategies, (i) CSGP  (ii) CPC, and (iii) UnSGP  under different scenarios. The base parameters for the experiment are given in Table~\ref{tab:Parameters}. Return of the strategy is measured via the expected terminal utility and the corresponding risk via $5^{th}$ percentile of the terminal wealth, defined for $i^{th}$ strategy as: $$P^{0.05}_i = \mathrm{perctile}\Big( W_{T}^i,  \ 5\% \Big), \quad i \in \{\mathrm{UnSGP, CSGP, CPC}\},$$ 
Intuitively, $P^{0.05}_i < X_0 + I_0 $ implies that the strategy lost money and $P^{0.05}_i > X_0 + I_0 $ reflects the profit at even the $5^{th}$ percentile.
\begin{table}[!ht]
\centering
\begin{tabular}{c} %{|l|r|l|r|}
\hline
$\mu_1 =  7\%, \ \mu_2 =  3\%, \  \mu_3 =  5\%, \sigma_1 =  30\%, \ \sigma_2 =  10\%, \  \sigma_3 =  25\%$ \\ \hline
$a_{31} = a_{32} =  0.6, \ \gamma = 0.5, \ k = 0, \ \delta = 9\%$  \\ \hline
$N = 8, \  X_0 = I_0 = B_0 = 1, \ h = 1,  \ N_{sim} = 10 \text{ (millions)}$ \\ \hline
\end{tabular}
\caption{Parameters for the simulations.}
\label{tab:Parameters}
\end{table}

\textbf{Portfolio Investment:} Figure~\ref{fig:pi} depicts the portfolio investment of the CSGP and the CPC strategy. These graphs present several interesting features of the two strategies.  First, the negative correlation constraint results in short position in stock for CSGP and CPC strategy, this is in contrast to the long position for the UnSGP strategy. Secondly, the investments in CSGP strategy depends only on the number of time steps remaining. As a result, the investment remains the same in the last time step regardless of whether $N =$ 2, 4, 6, 8 or 10. On the contrary, the optimal investment in CPC strategy depends on the length of the investment horizon. Thirdly, for CSGP strategy the absolute value of the short position at time $ t = 0$ is lower for short investment horizon, i.e. $|\pi_{t_0}|$ for $N = 2$ is smaller than $|\pi_{t_0}|$ for $N = 10$. This is because the shorter time horizon allows the investor to better control the correlation constraint at the terminal step. Finally, the investments in the CPC strategy is approximately the time average of the investments for the CSGP strategy. This relationship results in the corresponding expected terminal utility of the two strategies to be almost the same. On the contrary, \cite{Wu2014} report higher utility for the CPC strategy. In our problem, the choice of exponential utility function makes the optimal investment strategies to be  independent of the initial wealth which in-turn leads to similar performance of CPC and CSGP strategy.  Consequently, for the remaining simulations we present results for only CSGP and UnSGP strategy.

  %Write about (i) negative investment compared to positive investment in unconstrained, (ii) investment at any point depends upon the distance from the terminal time (last period investment same regardless of $N$). (iii) Investment at Time 0 becomes more negative for increasing N (iv) precommitment investment average of subgame and constant for all time steps. (v) Utility and risk of the precommitment and subgame strategy almost the same with relatively small difference. 

    % \begin{figure*}[!ht]
    %     \centering
    %         \includegraphics[width=0.4\textwidth]{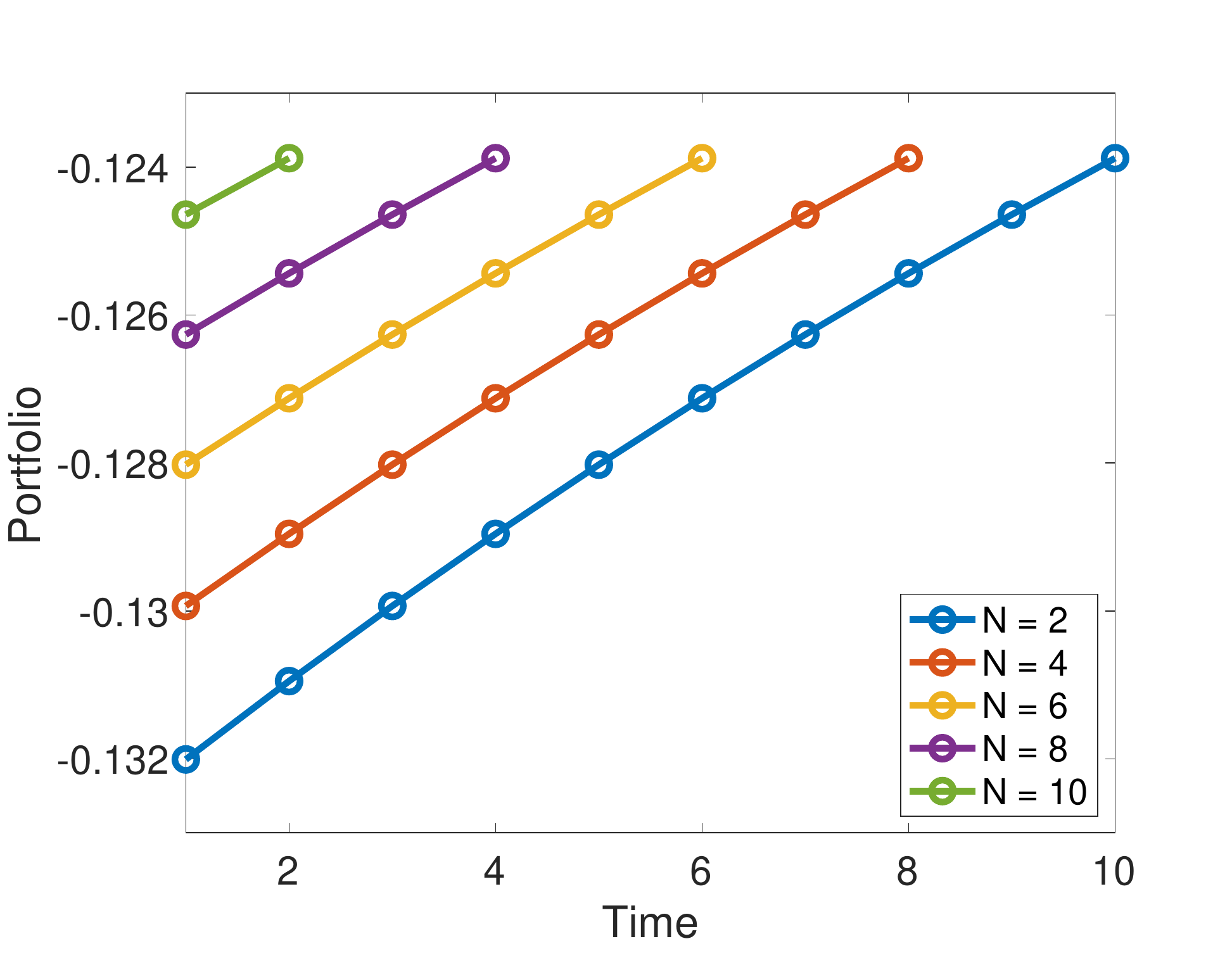}
    %         \includegraphics[width=0.4\textwidth]{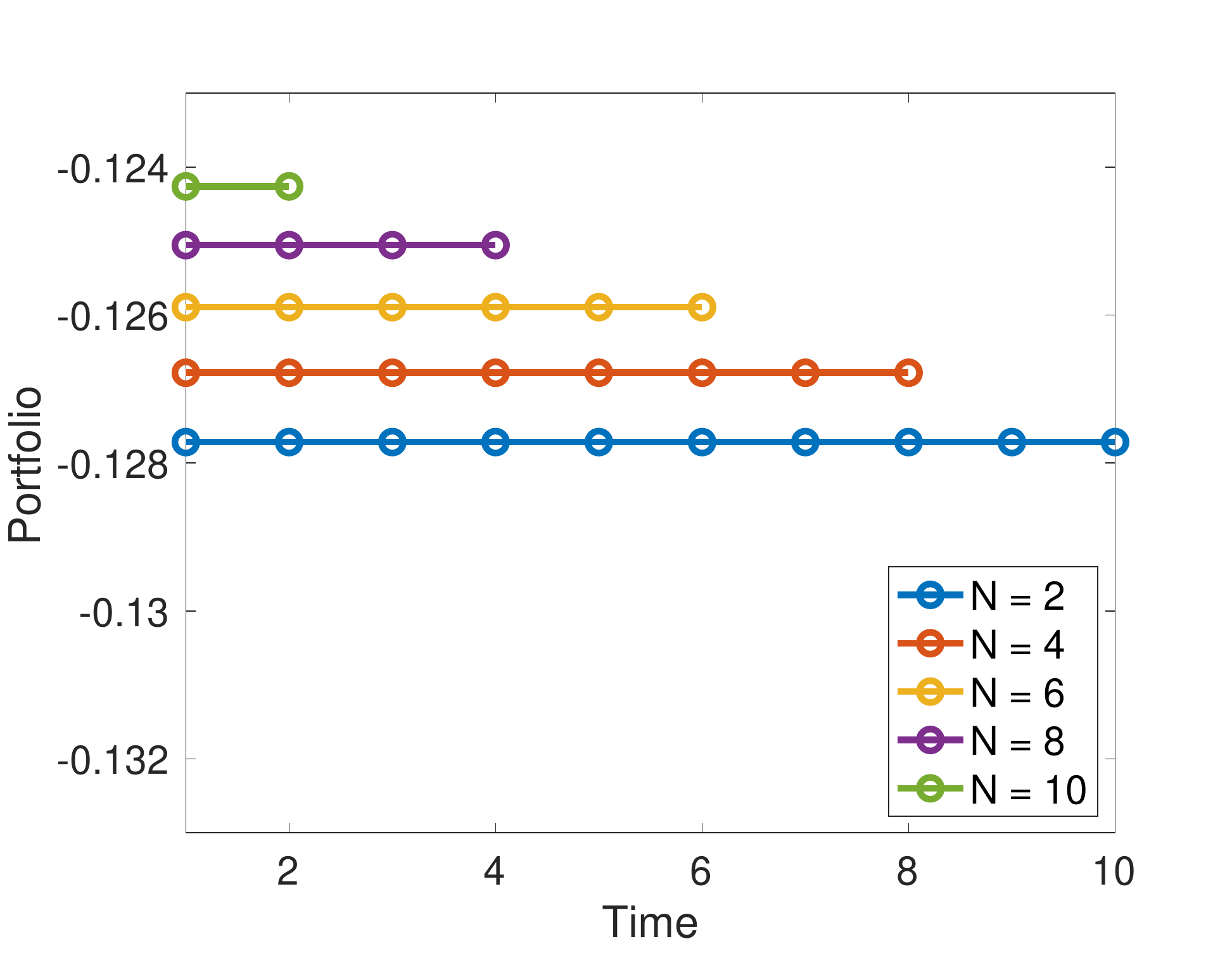}
    %     \caption[ ]
    %     {{\small portfolio subgame and precommitment}}
    %     \label{fig:pi}
    % \end{figure*}

\begin{figure}[!ht]


  \centering
  \begin{subfigure}[b]{0.45\textwidth}
    \includegraphics[width=\textwidth]{figures/portfolio_time_subgame.pdf}
    \caption{Constrained subgame perfect}
    \label{fig:pi_csgp}
  \end{subfigure}
 \quad
    \begin{subfigure}[b]{0.45\textwidth}
    \includegraphics[width=\textwidth]{figures/portfolio_time_pc.pdf}
    \caption{Constrained precommitment}
    \label{fig:pi_cpc}
  \end{subfigure}
\caption{Optimal portfolio investment}
\label{fig:pi}
\end{figure}

\textbf{Impact of correlation constraint: } Figure~\ref{fig:impact_delta} depicts the impact of correlation constraint at the terminal expected utility (and risk) for the CSGP strategy relative to the terminal expected utility (and risk) at $\delta = 1\%$. We find that a more conservative constraint (higher $\delta$) leads to lower utility and higher risk for different values of $N$. While lower utility for higher $\delta$ is expected, higher risk is driven by increased short position in stocks. For $N = 2$, we find the utility declines by 2\% and risk increases by 8\% as we increase from $\delta  = 1\%$ to $\delta = 40\%$. Longer investment horizon leads to higher reduction in utility and increase in risk due to increased uncertainty in stock price.\\
    \begin{figure*}[!ht]
        \centering
            \includegraphics[width=0.4\textwidth]{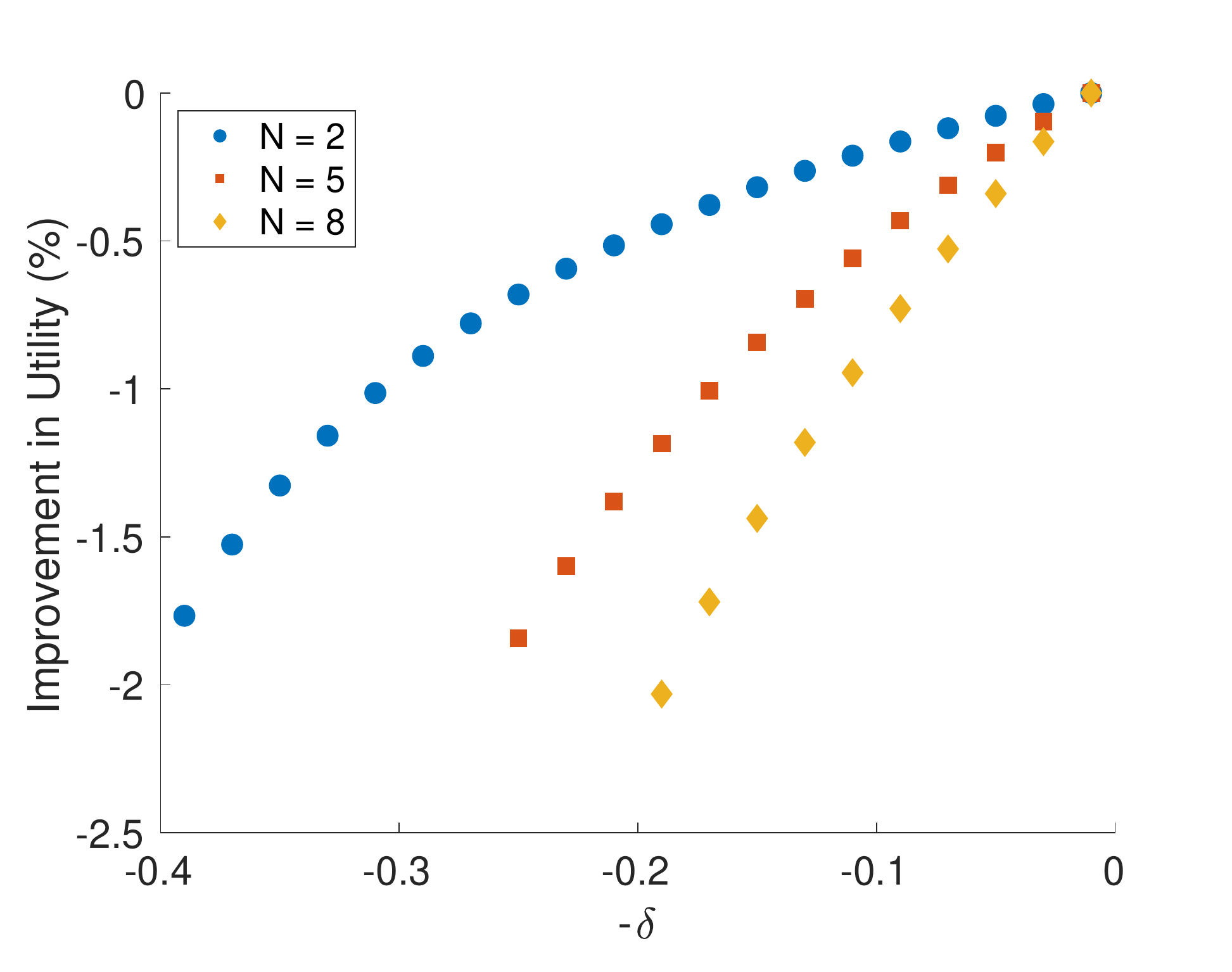}
            \includegraphics[width=0.4\textwidth]{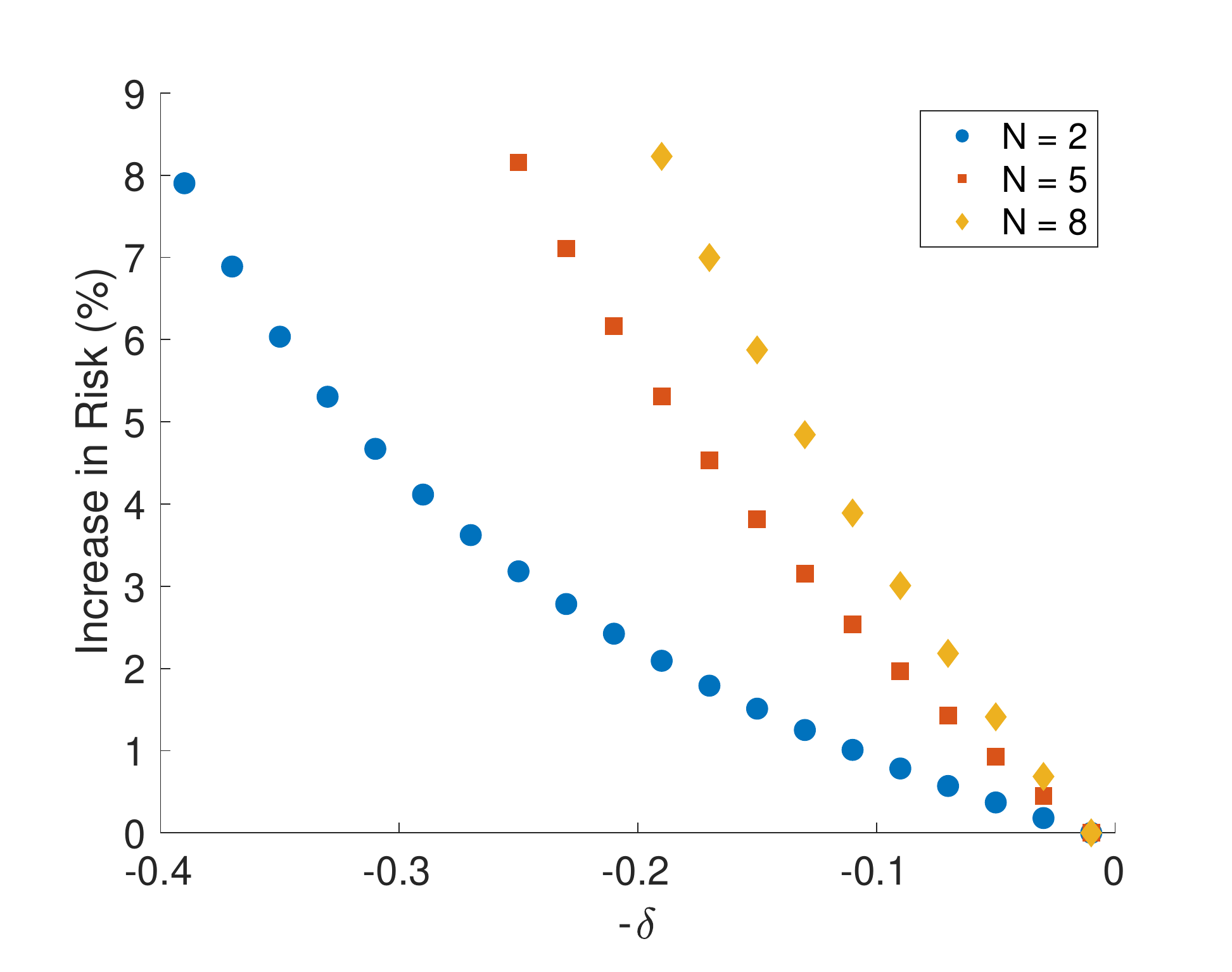}
        \caption[ ]
        {{\small Impact of $\delta$ on the CSGP strategy.}}
        \label{fig:impact_delta}
    \end{figure*}

\textbf{Performance in risky environment}
Next, we discuss the advantage of CSGP relative to UnSGP strategy as the investment scenario becomes more risky. We consider two risk scenarios: (i) Longer investment horizon (cf. Figure~\ref{fig:impact_delta}) and (ii) Higher correlation of terminal wealth for unconstrained portfolio with the market index. 
\begin{itemize}
\item \textbf{Longer investment horizon:} Figure~\ref{fig:impact_N} depicts the impact of investment horizon on the expected terminal utility and risk due to CSGP and UnSGP strategy. The UnSGP strategy leads to 50\% increase in utility and 150\% increase in risk of the portfolio as we increase investment horizon $N$ from  2 to 10. On the contrary, the CSGP strategy leads to only 30\% increase in utility but the risk \text{reduces} by 18\%.  Thus for the long investment horizon CSGP strategy not only improves utility but also provides considerable risk reduction. 
    \begin{figure*}[!ht]
        \centering
            \includegraphics[width=0.4\textwidth]{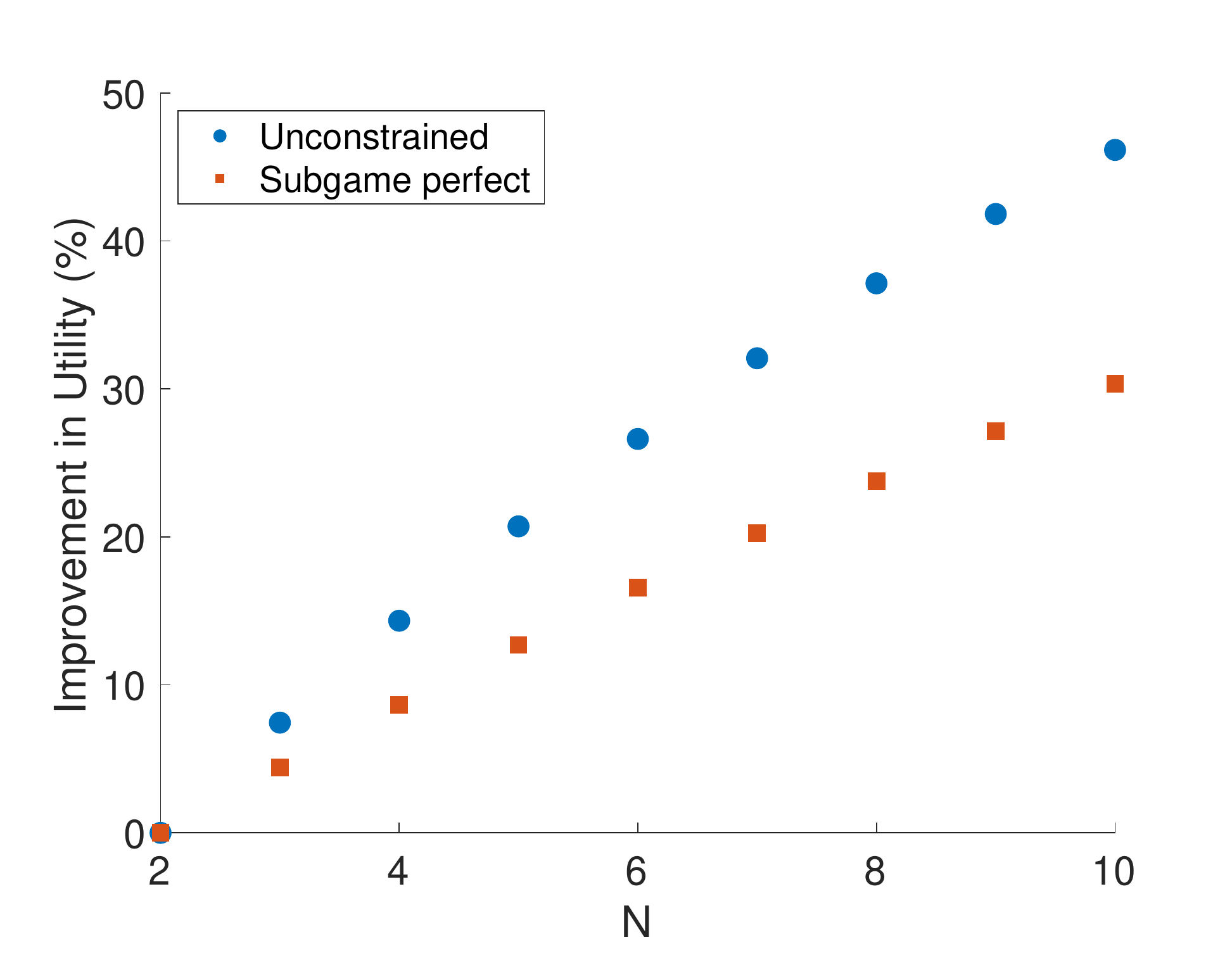}
            \includegraphics[width=0.4\textwidth]{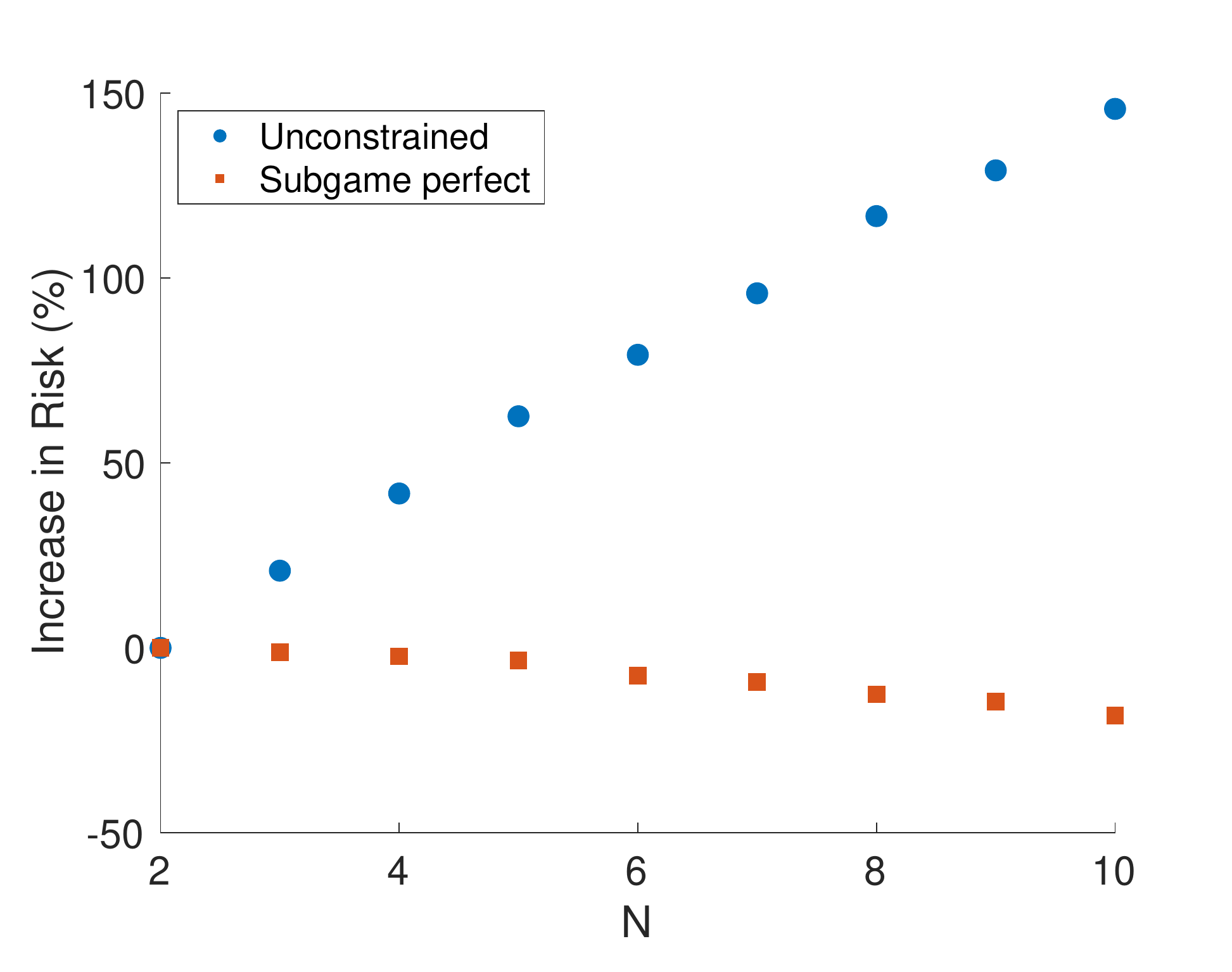}
        \caption[ ]
        {{\small Impact of investment horizon ($N$)}}
        \label{fig:impact_N}
    \end{figure*}
\item \textbf{Higher correlation of  unconstrained portfolio wealth with market index:} Figure~\ref{fig:impact_corrl} considers the impact of $\rho_{W_T^{\overline{\pi}}, B_T}$ on utility and risk. We consider different scenarios of $\rho_{W_T^{\overline{\pi}}, B_T}$ by changing the volatility of the stock $\sigma_1$ and the index $\sigma_3$. For simplicity we consider scenarios such that $\sigma := \sigma_1 = \sigma_3$. As  $\rho_{W_T^{\overline{\pi}}, B_T}$ increases, the UnSGP strategy provides 55\% improvement in utility, however,  it results in 250\% increase in risk relative to low correlation environment. For such high risk environment, the CSGP strategy provides considerable risk reduction. Unlike UnSGP strategy, here we observe 6\% reduction in risk at the expense of 5\% loss of utility.
\end{itemize}

    \begin{figure*}[!ht]
        \centering
            \includegraphics[width=0.4\textwidth]{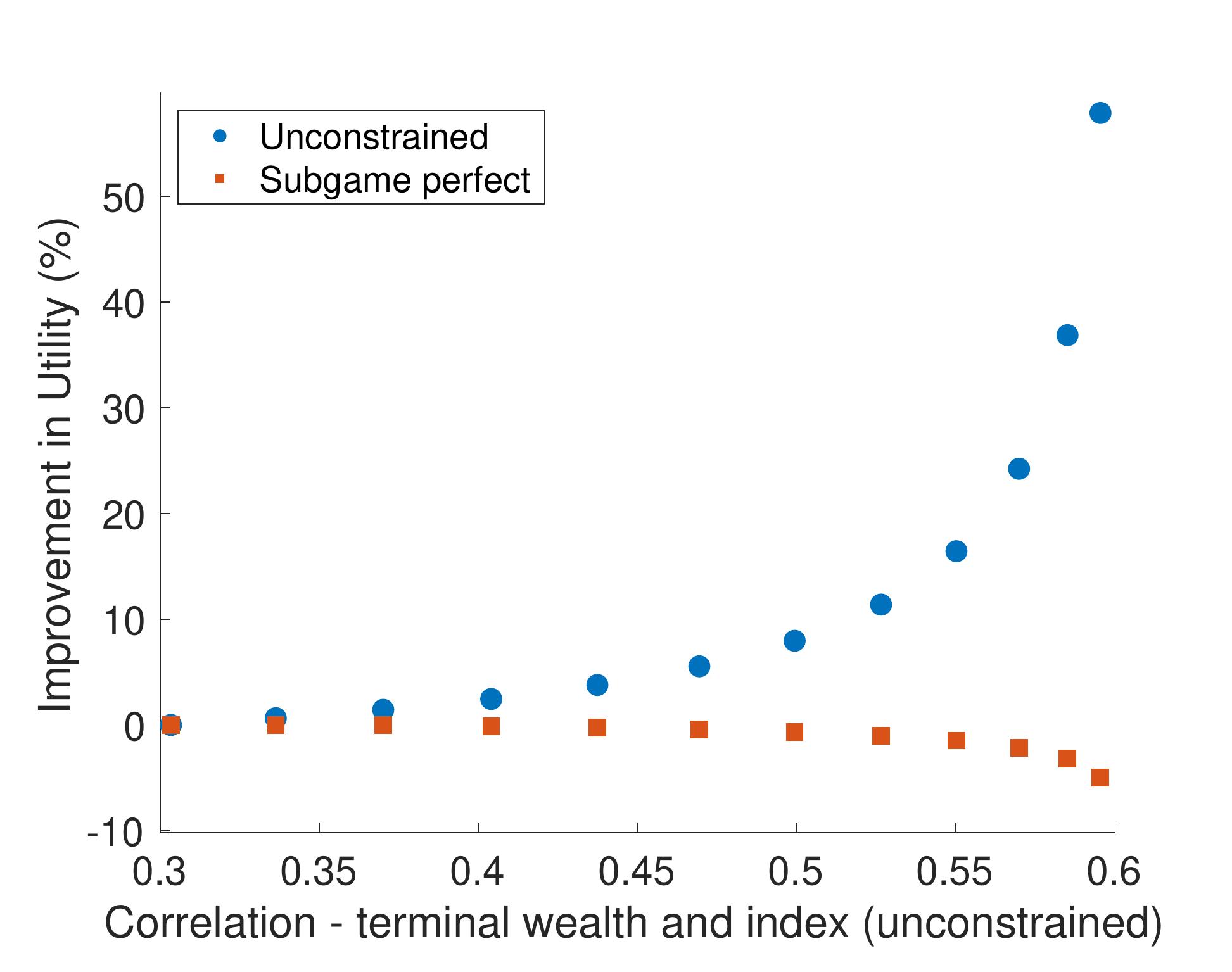}
            \includegraphics[width=0.4\textwidth]{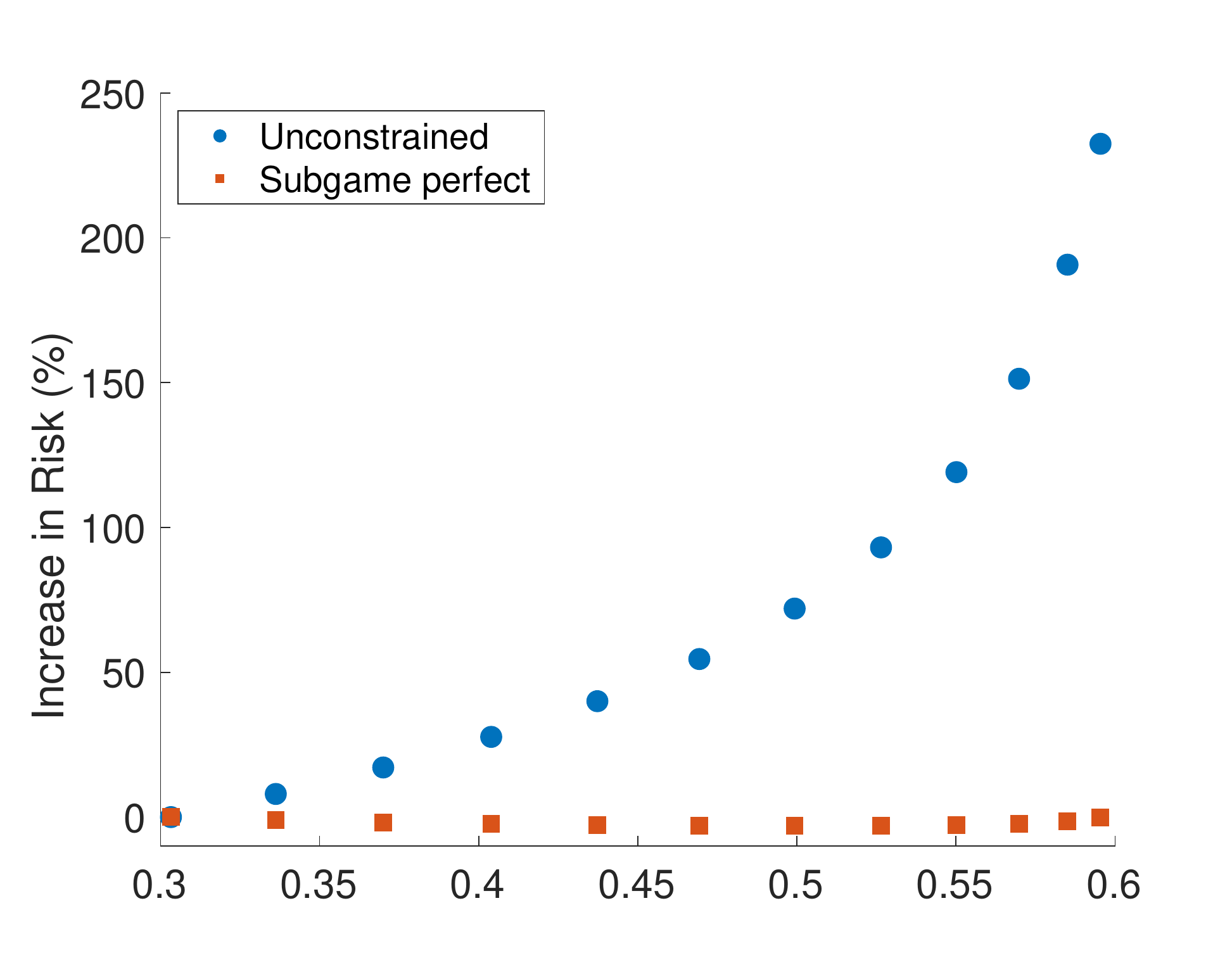}
        \caption[ ]
        {{\small High correlation environment}}
        \label{fig:impact_corrl}
    \end{figure*}

\section{Concluding remarks}
\label{conclusion}
% This paper provides an analytical solution to a multiperiod portfolio optimization problem which maximizes the terminal expected utility subject to negative correlation constraint of the terminal wealth with the market index. The non-linearity due to correlation constraint results in time inconsistency for the optimization problem. As a result we rely on subgame perfect (CSGP) strategy using backward induction to derive the optimal solution. We compare the performance of the CSGP strategy to a constrained precommitment (CPC) and an unconstrained subgame perfect (UnSGP) strategy. Using numerical simulations we find that correlation constraint can provide significant risk reduction relative to the unconstrained strategy. 

We study the impact of correlation constraint on a portfolio optimization problem of one stock and stochastic income stream in a discrete multiperiod setting. The constraint  represents the correlation of the wealth of the portfolio with a benchmark index representing the state of the economy. The non-linearity due to correlation constraint results in time inconsistency of the optimal portfolios. A solution to this time consistency issue is the subgame perfect (CSGP) strategy introduced through backward induction. This strategy considers the portfolio problem as a non-cooperative game between players at each time point. We compare the performance of CSGP strategy to the constrained precommitment (CPC) and the unconstrained subgame perfect (UnSGP) strategy. Using numerical simulations we find that the correlation constraint provides significant risk reduction relative to the unconstrained strategy. The reduction in risk compensates for the loss in utility due to the constraint. We also find similar performances of the two constrained strategies CSGP and CPC.  Future works aim at extending the model to allow for more than one stock as well as convergence results to a continuous time model.

% MOREOVER WE COMPARE IT WITH THE PRECOMMITTMENT STRATEGY.
%Using numerical experiments we find that the correlation constraint reduces the risk and thus the capital requirement of the constrained portfolio as compared to the unconstrained portfolio. In the future we aim to address the simplicity of the problem by generalizing the current setup by adding more than one stock in the portfolio and find numerical or analytical solution to the problem.

\appendix

\section{Proof of Lemma \ref{lemmaSingleP}}

For defining correlation we need some fundamental quantities 
like covariance between $W_{t_N}$, $B_{t_N}$ and variance of $W_{t_N}$ and $B_{t_N}$. Recall that $t_N=T.$ 
\begin{align}
Cov(W_{t_N},B_{t_N}|\mathscr{F}_{t_{N-1}})&=Cov(I_{t_N},B_{t_N}|\mathscr{F}_{t_{N-1}})+Cov(X_{t_N},B_{t_N}|\mathscr{F}_{t_{N-1}})\\
		    &=\pi_{t_{N-1}}B_{t_{N-1}}\sigma_1\sigma_3a_{31}h + e^{Tk}B_{t_{N-1}}\sigma_3a_{32}\sqrt{h}\{\frac{|\mu_2h + \sigma_2\sqrt{h}|- |\mu_2h - \sigma_2\sqrt{h}|}{2}\}\\
Var(W_{t_N}|\mathscr{F}_{t_{N-1}})&=V(X_{t_N}|\mathscr{F}_{t_{N-1}})+V(I_{t_N}|\mathscr{F}_{t_{N-1}})\\
	     &=\pi^2_{t_{N-1}}\sigma_1^2h + \frac{e^{2Nk}}{4}(|\mu_2h + \sigma_2\sqrt{h}|+ |\mu_2h - \sigma_2\sqrt{h}|)^2\\
Var(B_{t_N}|\mathscr{F}_{t_{N-1}})&=B_{t_{N-1}}^2\sigma_3^2h
\end{align}
Thus, correlation between $W_{t_N}$ and $B_{t_N}$ conditional on $\mathscr{F}_{t_{N-1}}$ is given by
\begin{equation}
correl(W_{t_N},B_{t_N}|\mathscr{F}_{t_{N-1}})=\frac{\pi_{t_{N-1}} \sigma_1 a_{31}\sqrt{h} + Ma_{32}}{\sqrt{\pi_{t_{N-1}}^2 \sigma_1^2h + M^2}}
\end{equation}
where, as before, $M=\frac{e^{Tk}}{2}(|\mu_2h + \sigma_2\sqrt{h}|+ |\mu_2h - \sigma_2\sqrt{h}|).$\\

\section{Proof of Theorem \ref{un} }
In this section, we will derive the optimal investment for the unconstrained problem. 
\begin{align}
\bar{\pi}=\bar{\pi}_{t_{N-1}}&=\arg \max E[-e^{-\gamma(X_{t_N} + I_{t_N})}|\mathscr{F}_{t_{N-n}}]\\
		&=\arg \min e^{-\gamma(X_{t_{N-1}} + I_{t_{N-1}})}E[e^{-\gamma(X_{t_N}-X_{t_{N-1}} + I_{t_N}-I_{t_{N-1}})}|\mathscr{F}_{t_{N-n}}]\\
		&=\arg \min e^{-\gamma(X_{t_{N-1}} + I_{t_{N-1}})}E[e^{-\gamma \pi_{t_{N-1}}(\mu_1h + \sigma_1\sqrt{h} \epsilon_N^{s})}|\mathscr{F}_{t_{N-n}}]E[e^{-\gamma e^{kt_N}|\mu_2h + \sigma_2\sqrt{h} \epsilon_N^{I}|}|\mathscr{F}_{t_{N-n}}]\\
		&=\arg\min e^{-\gamma(X_{t_{N-1}} + I_{t_{N-1}})} \{\frac{e^{-\gamma \pi_{t_{N-1}}(\mu_1h + \sigma_1\sqrt{h})}+e^{-\gamma \pi_{t_{N-1}}(\mu_1h - \sigma_1\sqrt{h})}}{2}\}C\\
		&=\arg\min \{e^{-\gamma \pi_{t_{N-1}}(\mu_1h + \sigma_1\sqrt{h})}+e^{-\gamma \pi_{t_{N-1}}(\mu_1h - \sigma_1\sqrt{h})} \}\\
		&=\frac{1}{2\gamma\sigma_1\sqrt{h}}\ln(\frac{1+\theta\sqrt{h}}{1-\theta\sqrt{h}}), 
\end{align}
where 

$$
\theta=\frac{\mu_1}{\sigma_1},\quad
C=\{\frac{e^{-\gamma e^{kt_N} |\mu_2h + \sigma_2\sqrt{h}|}+e^{-\gamma e^{kt_N} |\mu_2h - \sigma_2\sqrt{h}|}}{2}\}.
$$

\section{Proof of Theorem \ref{theoremSinglePeriod}}

\begin{proof}By Lemma \ref{lemmaSingleP} it follows that
$$correl(W_{t_N},B_{t_N}|\mathscr{F}_{t_{N-1}})=\frac{\pi_{t_N-1} \sigma_1\sqrt{h} a_{31} + Ma_{32}}{\sqrt{\pi_{t_N-1}^2 \sigma_1^2h + M^2}}.$$
Thus
\begin{align}
correl(W_{t_N},B_{t_N}|\mathscr{F}_{t_{N-1}})&=\frac{\pi_{t_N-1} \sigma_1\sqrt{h} a_{31} + Ma_{32}}{\sqrt{\pi_{t_N-1}^2 \sigma_1^2h + M^2}} \leq -\delta \\
		& \Leftrightarrow  (\frac{\pi_{t_N-1} \sigma_1\sqrt{h} a_{31} + Ma_{32}}{\sqrt{\pi_{t_N-1}^2 \sigma_1^2h + M^2}})^2 \geq\delta^2  \text{ and } \pi_{t_N-1} \sigma_1\sqrt{h} a_{31} + Ma_{32} \leq 0 \\
		& \Leftrightarrow  \pi_{t_N-1}^2 \sigma_1^2h(a_{31}^2-\delta^2)+2\pi_{t_N-1}\sigma_1\sqrt{h}a_{31}a_{32}M+M^2(a_{32}^2-\delta^2) \geq \label{qe}\\
		&\text{ and } \pi_{t_N-1} \leq \frac{-Ma_{32}}{\sigma_1\sqrt{h} a_{31}} 
\label{constraintSimpl}
\end{align}
By \eqref{qe} we get that
\begin{equation}
\pi_{t_N-1} \leq R_1 \text{ and } \pi_{t_N-1} \geq R_2
\end{equation}

where 
\begin{align}
R_1&=\frac{M}{\sigma_1\sqrt{h}(a_{31}^2-\delta^2)}(-a_{31}a_{32}-\delta\sqrt{a_{31}^2+a_{32}^2-\delta^2})\\
R_2&=\frac{M}{\sigma_1\sqrt{h}(a_{31}^2-\delta^2)}(-a_{31}a_{32}+\delta\sqrt{a_{31}^2+a_{32}^2-\delta^2})
\end{align}

Also, \eqref{constraintSimpl} tells us that $$\pi_{t_N-1} \leq \frac{-Ma_{32}}{\sigma_1\sqrt{h} a_{31}}.$$ To complete the proof it remains to show that 
\begin{equation}
R_1 < \frac{-Ma_{32}}{\sigma_1\sqrt{h} a_{31}} < R_2.
\end{equation} \\
Since, 
\begin{align}
&\frac{-Ma_{32}}{\sigma_1\sqrt{h}(a_{31}-\frac{\delta^2}{a_{31}})} < \frac{-Ma_{32}}{\sigma_1\sqrt{h} a_{31}}\\ \implies& \frac{-Ma_{32}}{\sigma_1\sqrt{h}(a_{31}-\frac{\delta^2}{a_{31}})} - \frac{M\delta\sqrt{a_{31}^2+a_{32}^2-\delta^2}}{\sigma_1\sqrt{h}(a_{31}^2-\delta^2)} <  \frac{-Ma_{32}}{\sigma_1\sqrt{h} a_{31}}\\
\implies& R_1 < \frac{-Ma_{32}}{\sigma_1\sqrt{h} a_{31}}.
\end{align}
For the claim $\frac{-Ma_{32}}{\sigma_1\sqrt{h} a_{31}} < R_2$, we take a slightly different route. Notice that
\begin{align}
R_2&=\frac{M}{\sigma_1\sqrt{h}(a_{31}^2-\delta^2)}(-a_{31}a_{32}+\delta\sqrt{a_{31}^2+a_{32}^2-\delta^2})\\
     &=\frac{-Ma_{31}a_{32}}{\sigma_1\sqrt{h}(a_{31}^2-\delta^2)} + \frac{M}{\sigma_1\sqrt{h}(a_{31}^2-\delta^2)}\delta\sqrt{a_{31}^2+a_{32}^2-\delta^2})	\\
     &=\frac{-Ma_{32}}{\sigma_1\sqrt{h}a_{31}(1-\frac{\delta^2}{a_{31}^2})}	+ \frac{Ma_{32}}{\sigma_1\sqrt{h}a_{31}(1-\frac{\delta^2}{a_{31}^2})}\delta\sqrt{\frac{1}{a_{31}^2}+\frac{1}{a_{32}^2}-\frac{\delta^2}{a_{31}^2a_{32}^2}}\\
    &=\frac{-Ma_{32}}{\sigma_1\sqrt{h} a_{31}}(\frac{1}{1-\frac{\delta^2}{a_{31}^2}})[1-\delta\sqrt{\frac{1}{a_{31}^2}+\frac{1}{a_{32}^2}-\frac{\delta^2}{a_{31}^2a_{32}^2}}]
\label{part1}
\end{align}

Since $$\frac{\delta^2}{a_{31}^2}<1,$$ 
\begin{align}
&\implies \frac{\delta^2}{a_{31}^2a_{32}^2}<\frac{1}{a_{32}^2}\\
&\implies \frac{1}{a_{32}^2}-\frac{\delta^2}{a_{31}^2a_{32}^2}>0\\
&\implies \frac{1}{a_{31}^2}+\frac{1}{a_{32}^2}-\frac{\delta^2}{a_{31}^2a_{32}^2}>\frac{1}{a_{31}^2}\\
&\implies \delta\sqrt{\frac{1}{a_{31}^2}+\frac{1}{a_{32}^2}-\frac{\delta^2}{a_{31}^2a_{32}^2}}>\delta\frac{1}{a_{31}}\\
&\implies 1-\delta\sqrt{\frac{1}{a_{31}^2}+\frac{1}{a_{32}^2}-\frac{\delta^2}{a_{31}^2a_{32}^2}}<1-\delta\frac{1}{a_{31}}\\
&\implies \frac{1}{1-\frac{\delta^2}{a_{31}^2}}[1-\delta\sqrt{\frac{1}{a_{31}^2}+\frac{1}{a_{32}^2}-\frac{\delta^2}{a_{31}^2a_{32}^2}}]<\frac{1}{1-\frac{\delta^2}{a_{31}^2}}[1-\delta\frac{1}{a_{31}}]\\
&\implies \frac{1}{1-\frac{\delta^2}{a_{31}^2}}[1-\delta\sqrt{\frac{1}{a_{31}^2}+\frac{1}{a_{32}^2}-\frac{\delta^2}{a_{31}^2a_{32}^2}}]<\frac{1}{1+\frac{\delta}{a_{31}}}\\
&\implies \frac{-Ma_{32}}{\sigma_1\sqrt{h} a_{31}}\frac{1}{1-\frac{\delta^2}{a_{31}^2}}[1-\delta\sqrt{\frac{1}{a_{31}^2}+\frac{1}{a_{32}^2}-\frac{\delta^2}{a_{31}^2a_{32}^2}}]>-\frac{Ma_{32}}{\sigma_1\sqrt{h} a_{31}}\frac{1}{1+\frac{\delta}{a_{31}}}\\
&\implies -\frac{Ma_{32}}{\sigma_1\sqrt{h} a_{31}}\frac{1}{1+\frac{\delta}{a_{31}}} < R_2\\
&\implies -\frac{Ma_{32}}{\sigma_1\sqrt{h} a_{31}}<-\frac{Ma_{32}}{\sigma_1\sqrt{h} a_{31}}\frac{1}{1+\frac{\delta}{a_{31}}} < R_2
\end{align}

Thus, we have proved that $$R_1 < \frac{-Ma_{32}}{\sigma_1\sqrt{h} a_{31}} < R_2.$$ Hence $R_1$ will be optimal solution whenever $\bar\pi_{t_N-1} \geq R_1$, else $\bar\pi_{t_N-1}$ is the optimal solution. It is easy to observe that $$\frac{\pi_{t_N-1} \sigma_1\sqrt{h} a_{31} + Ma_{32}}{\sqrt{\pi_{t_N-1}^2 \sigma^2 + M^2}}>-a_{31}.$$ So, there is no admissible trading strategies if $\delta>a_{31}.$
\end{proof}

\section{Proof of lemma \ref{lemmaMultiPeriod}}
 We need some basic results on variance and covariance of terms involved. Straightforward computations lead to, 
\begin{align}
Var(X_{t_{N}}|\mathscr{F}_{t_{N-n}})&=\sigma_1^2h\sum_{i=N-n}^{N-1}\pi_{t_{i}}^2\\
Var(B_{t_{N}}|\mathscr{F}_{t_{N-n}})&=B_{t_{N-n}}^2[(\tilde{\mu_3}^2+\sigma_3^2h)^n-\tilde{\mu_3}^{2n}]\\
Var(I_{t_{N}}|\mathscr{F}_{t_{N-n}})&=\M^2\sum_{s=N-n+1}^{N}e^{2kt_s}\\
Cov(X_N,B_N|\mathscr{F}_{t_{N-n}})&=B_{t_{N-n}}\tilde{\mu_3}^{n-1}h\sigma_1\sigma_3a_{31}\sum_{i=N-n}^{N-1}\pi_{i}\\
Cov(I_N,B_N|\mathscr{F}_{t_{N-n}})&=B_{t_{N-n}}\M\tilde{\mu_3}^{n-1}\sqrt{h}\sigma_3a_{32}\sum_{s=N-n+1}^{N}e^{kt_s}
\end{align}
where $\M=\frac{|\mu_2h + \sigma_2\sqrt{h}|-|\mu_2h - \sigma_2\sqrt{h}|}{2}$. Thus,
\begin{align}
correl(X_N+I_N,B_N|\mathscr{F}_{t_{N-n}})&=\frac{Cov(X_N,B_N|\mathscr{F}_{t_{N-n}})+Cov(I_N,B_N|\mathscr{F}_{t_{N-n}})}{\sqrt{Var(X_{t_{N}}+I_{t_{N}}|\mathscr{F}_{t_{N-n}}) Var(B_{t_{N}}|\mathscr{F}_{t_{N-n}})}}\\
&=\frac{b_1\pi_{t_{N-n}}+b_{2,N-n}}{\sqrt{k_{1,n}^2\pi_{t_{N-n}}^2+k_{2,N-n}^2}}\\
&b_1=\theta_3\sigma_1a_{31}\sqrt{h}\\
&k_{1,n}^2=[(1+\theta_3^2)^n-1]\sigma_1^2h\\
&b_{2,N-n}(\pi)=\theta_3[\M a_{32}\sum_{s=N-n+1}^{N}e^{kt_s} + \sigma_1a_{31}\sqrt{h}\sum_{i=N-n+1}^{N-1}\pi_{i}]\\
&k_{2,N-n}^2(\pi)=[(1+\theta_3^2)^n-1]\{\M^2\sum_{s=N-n+1}^{N}e^{2kt_s} + \sigma_1^2h\sum_{i=N-n+1}^{N-1}\pi_{t_{i}}^2\}\\
&\theta_3=\frac{\sigma_3\sqrt{h}}{\tilde{\mu_3}}.
\end{align}

\section{Proof of theorem \ref{mainTheorem}}
\begin{proof}
By Lemma~\ref{lemmaMultiPeriod} it follows that 
\begin{equation*}
correl(X_N+I_N,B_N|\mathscr{F}_{t_{N-n}})=\frac{b_1\pi_{t_{N-n}}+b_{2,N-n}}{\sqrt{k_{1,n}^2\pi_{t_{N-n}}^2+k_{2,N-n}^2}}.
\end{equation*}
Thus, the constraint
\begin{align}
correl(X_N+I_N,B_N|\mathscr{F}_{t_{N-n}}) & =\frac{b_1\pi_{t_{N-n}}+b_{2,N-n}}{\sqrt{k_{1,n}^2\pi_{t_{N-n}}^2+k_{2,N-n}^2}}  \leq -\delta. \\
& \Leftrightarrow Q(\pi_{t_{N-n}}) \geq 0 \text{ and } \pi_{t_{N-n}} \leq \frac{-b_{2,N-n}}{b_1},
\end{align}
where $Q(\pi_{t_{N-n}}):=\pi_{t_{N-n}}^2(b_1^2-k_{1,n}^2\delta^2) + 2b_1b_{2,N-n}\pi_{t_{N-n}} + (b_{2,N-n}^2-\delta^2k_{2,N-n}^2).$ Similar to proof of  Theorem~\ref{theoremSinglePeriod}, we conclude this proof by proving that $\frac{-b_{2,N-n}}{b_1}$ lies between the roots of quadratic equation $Q(\pi_{t_{N-n}}) = 0$ for all $b_{2,N-n}$, i.e. 
%If we prove that $\frac{-b_{2,N-n}}{b_1}$ lies between the roots of quadratic equation for all $b_{2,N-n}$, then we conclude that
$$\bar{R}_{1,N-n}\leq\frac{-b_{2,N-n}}{b_1}\leq \bar{R}_{2,N-n}, $$
where $\bar{R}_{1,N-n}$ and $\bar{R}_{2,N-n}$ are the roots of the equation $Q(\pi_{t_{N-n}}) = 0$, and are given by 
% \begin{equation}
% Q(\pi_{t_{N-n}}):=\pi_{t_{N-n}}^2(b_1^2-k_{1,n}^2\delta^2) + 2b_1b_{2,N-n}\pi_{t_{N-n}} + (b_{2,N-n}^2-\delta^2k_{2,N-n}^2)=0, 
% \end{equation} are given by 
\begin{align}
\bar{R}_{1,N-n}&=\frac{-b_1b_{2,N-n}-\delta k_1k_{2,N-n}\sqrt{\big(\frac{b_{2,N-n}}{k_{2,N-n}}\big)^2+\big(\frac{b_1}{k_{1,n}}\big)^2-\delta^2}}{b_1^2-k_{1,n}^2\delta^2}\\
\bar{R}_{2,N-n}&=\frac{-b_1b_{2,N-n}+\delta k_1k_{2,N-n}\sqrt{\big(\frac{b_{2,N-n}}{k_{2,N-n}}\big)^2+\big(\frac{b_1}{k_{1,n}}\big)^2-\delta^2}}{b_1^2-k_{1,n}^2\delta^2}.
\end{align}  
We consider two cases (i) $b_{2,N-n}>0$ and (ii) $b_{2,N-n}<0:$ separately. \\
\textbf{CASE I:} $b_{2,N-n}>0:$
\begin{align}
\bar{R}_{1,N-n}&=\frac{-b_1b_{2,N-n}-\delta k_1k_{2,N-n}\sqrt{\big(\frac{b_{2,N-n}}{k_{2,N-n}}\big)^2+\big(\frac{b_1}{k_{1,n}}\big)^2-\delta^2}}{b_1^2-k_{1,n}^2\delta^2}\\
&\leq \frac{-b_1b_{2,N-n}}{b_1^2-k_{1,n}^2\delta^2} = \frac{-b_{2,N-n}}{b_1(1-\frac{k_{1,n}^2\delta^2}{b_1^2})} \leq \frac{-b_{2,N-n}}{b_1}.
\end{align}
Next we prove that $\frac{-b_{2,N-n}}{b_1}\leq \bar{R}_{2,N-n}.$ Indeed 
\begin{align}
\bar{R}_{2,N-n}&=\frac{-b_1b_{2,N-n}+\delta k_{1,n}k_{2,N-n}\sqrt{\big(\frac{b_{2,N-n}}{k_{2,N-n}}\big)^2+\big(\frac{b_1}{k_{1,n}}\big)^2-\delta^2}}{b_1^2-k_{1,n}^2\delta^2}\\
&=\frac{-b_{2,N-n}/b_1}{(1-\frac{k_{1,n}^2\delta^2}{b_1^2})}\left\{1-\frac{k_{1,n}k_{2,N-n}\delta}{b_1b_{2,N-n}}\sqrt{\big(\frac{b_{2,N-n}}{k_{2,N-n}}\big)^2+\big(\frac{b_1}{k_{1,n}}\big)^2-\delta^2}  \right\}\\
&=\frac{-b_{2,N-n}/b_1}{(1-\frac{k_{1,n}\delta}{b_1})(1+\frac{k_{1,n}\delta}{b_1})}\left\{1-\frac{k_{1,n}\delta}{b_1}\sqrt{1+\big(\frac{k_{2,N-n}}{b_{2,N-n}}\big)^2\Big[\big(\frac{b_1}{k_{1,n}}\big)^2-\delta^2\Big]}  \right\}\\
&\geq \frac{-b_{2,N-n}/b_1}{(1+\frac{k_{1,n}\delta}{b_1})} \geq \frac{-b_{2,N-n}}{b_1}.
\end{align}
So, we proved that $$b_{2,N-n}>0\implies \bar{R}_{1,N-n}\leq\frac{-b_{2,N-n}}{b_1}\leq \bar{R}_{2,N-n}.$$\\
\textbf{CASE II:} $b_{2,N-n}<0:$\\
For sake of simplicity, we write $-b_{2,N-n}=\hat{b}_{2,N-n}.$
Thus, 
\begin{align}
\bar{R}_{1,N-n}&=\frac{b_1\hat{b}_{2,N-n}-\delta k_1k_{2,N-n}\sqrt{\big(\frac{\hat{b}_{2,N-n}}{k_{2,N-n}}\big)^2+\big(\frac{b_1}{k_{1,n}}\big)^2-\delta^2}}{b_1^2-k_{1,n}^2\delta^2}\\
&=\frac{\hat{b}_{2,N-n}/b_1}{(1-\frac{k_{1,n}^2\delta^2}{b_1^2})}\left\{1-\frac{k_{1,n}k_{2,N-n}\delta}{b_1\hat{b}_{2,N-n}}\sqrt{\big(\frac{\hat{b}_{2,N-n}}{k_{2,N-n}}\big)^2+\big(\frac{b_1}{k_{1,n}}\big)^2-\delta^2}  \right\}\\
&=\frac{\hat{b}_{2,N-n}/b_1}{(1-\frac{k_{1,n}\delta}{b_1})(1+\frac{k_{1,n}\delta}{b_1})}\left\{1-\frac{k_{1,n}\delta}{b_1}\sqrt{1+\big(\frac{k_{2,N-n}}{\hat{b}_{2,N-n}}\big)^2\Big[\big(\frac{b_1}{k_{1,n}}\big)^2-\delta^2\Big]}  \right\}\\
&\leq \frac{\hat{b}_{2,N-n}/b_1}{(1+\frac{k_{1,n}\delta}{b_1})} \leq \frac{\hat{b}_{2,N-n}}{b_1}.
\end{align}
Next, 
\begin{align}
\bar{R}_{2,N-n}&=\frac{b_1\hat{b}_{2,N-n}+\delta k_{1,n}k_{2,N-n}\sqrt{\big(\frac{\hat{b}_{2,N-n}}{k_{2,N-n}}\big)^2+\big(\frac{b_1}{k_{1,n}}\big)^2-\delta^2}}{b_1^2-k_{1,n}^2\delta^2}\\
&=\frac{\hat{b}_{2,N-n}/b_1}{(1-\frac{k_{1,n}^2\delta^2}{b_1^2})}\left\{1+\frac{k_{1,n}k_{2,N-n}\delta}{b_1\hat{b}_{2,N-n}}\sqrt{\big(\frac{\hat{b}_{2,N-n}}{k_{2,N-n}}\big)^2+\big(\frac{b_1}{k_{1,n}}\big)^2-\delta^2}  \right\}\\
&=\frac{\hat{b}_{2,N-n}/b_1}{(1-\frac{k_{1,n}^2\delta^2}{b_1^2})}\left\{1+\frac{k_{1,n}\delta}{b_1}\sqrt{1+\big(\frac{k_{2,N-n}}{\hat{b}_{2,N-n}}\big)^2[\big(\frac{b_1}{k_{1,n}}\big)^2-\delta^2]}  \right\}\\
&\geq \frac{\hat{b}_{2,N-n}}{b_1}
\end{align}
Thus, we proved that for both cases $$\bar{R}_{1,N-n}\leq\frac{-b_{2,N-n}}{b_1}\leq \bar{R}_{2,N-n}.$$

\end{proof}

\section{Proof of theorem \ref{M2}}
\begin{proof}
\textbf{Objective Function:}
Objective function in \ref{preCT1}, in light of the deterministic trading strategies considered, is simplified as follows
\begin{equation}
\begin{aligned}
\underset{\pi_{t_{0}},\pi_{t_{1}},...\pi_{t_{N-1}}}{\text{maximize}}
& E[-e^{-\gamma(\sum_{i=0}^{N-1} \pi_i(\mu_1h + \sigma_1\sqrt{h}\epsilon_i^s) )}] \\
\implies \underset{\pi_{t_{0}},\pi_{t_{1}},...\pi_{t_{N-1}}}{\text{minimize}}
& \log[e^{-\gamma(\sum_{i=0}^{N-1} \pi_i\mu_1h )}E[e^{-\gamma(\sum_{i=0}^{N-1} \pi_i\sigma_1\sqrt{h}\epsilon_i^s) }]] \\
\implies \underset{\pi_{t_{0}},\pi_{t_{1}},...\pi_{t_{N-1}}}{\text{minimize}}
& -\gamma(\mu_1h + \sigma_1\sqrt{h})\sumIN \pi_i +\sumIN \log [(1+ e^{2\gamma(\pi_i\sigma_1\sqrt{h}) }) ] \\
\end{aligned}
\end{equation}

\textbf{Constraints :}
The constraint on trading strategy is simplified as follows 
\begin{equation}
\begin{aligned}
& \frac{b_1\sum_{i=0}^{N-1}\pi_i + b_2}{\sqrt{k_1^2\sum_{i=0}^{N-1}\pi_i^2 + k_2^2}} \leq -\delta \\
\implies & b_1^2(\sum_{i=0}^{N-1}\pi_i)^2 -\delta^2k_1^2\sum_{i=0}^{N-1}\pi_i^2 + 2b_2b_1\sumIN \pi_i + b_2^2 -k_2^2\delta^2 \geq 0 \text{ and }\sum_{i=0}^{N-1}\pi_i \le \frac{-b_2}{b_1}\\
\implies & (\sum_{i=0}^{N-1}\pi_i)^2 -\frac{ \delta^2k_1^2}{b_1^2}\sum_{i=0}^{N-1}\pi_i^2 + 2\frac{b_2}{b_1}\sumIN \pi_i + \frac{b_2^2 -k_2^2\delta^2}{b_1^2} \geq 0 \text{ and }\sum_{i=0}^{N-1}\pi_i \le \frac{-b_2}{b_1}\\
\implies & (\sum_{i=0}^{N-1}\pi_i)^2 -\frac{ \delta^2k_1^2}{b_1^2}\sum_{i=0}^{N-1}\pi_i^2 + 2\frac{b_2}{b_1}(\sumIN \pi_i +\frac{b_2}{b_1})  - \frac{b_2^2 + k_2^2\delta^2}{b_1^2} \geq 0 \text{ and }\sum_{i=0}^{N-1}\pi_i \le \frac{-b_2}{b_1}\\
\implies & \Pi'A\Pi + 2\frac{b_2}{b_1}(\textbf{1}'\Pi +\frac{b_2}{b_1}) \geq \frac{b_2^2 + k_2^2\delta^2}{b_1^2} \text{ and } \textbf{1}'\Pi \le \frac{-b_2}{b_1}\\
\implies & \Pi'A\Pi + 2c(\textbf{1}'\Pi +c) \geq c^2 + h^2 \text{ and } (\textbf{1}'\Pi+c) \leq 0\\
\end{aligned}
\label{constTrans}
\end{equation}
where $\Pi' = [\pi_{t_{0}},\pi_{t_{1}},...\pi_{t_{N-1}}]$, $c= \frac{b_2}{b_1}$, $h^2=\frac{k_2^2\delta^2}{b_1^2}$ and  $A$ is a symmetrix matrix given by, 
\[
    a_{i,j}= 
\begin{cases}
    1-\frac{\delta^2k_1^2}{b_1^2},& \text{if } i=j\\
    1,              & \text{otherwise}
\end{cases}=
\begin{cases}
    a,& \text{if } i=j\\
    1,              & \text{otherwise}
\end{cases}
\]
The characteristic equation of A is given by $$(\lambda - (a-1))^{n-1}(\lambda - (a+n-1))=0,$$ thus $A$ has $n-1$ negative eigenvalues and one positive eigenvalue. Let us first visualize the solution in two dimensions, which will then extend to N dimensions.\\ 
%https://math.stackexchange.com/questions/84206/how-to-calculate-the-following-determinants-all-ones-minus-i

\textbf{Two Dimensions:}\\
In two dimensions, the eigen values are $a+1 (>0)$ and $a-1(<0)$ and  the constraint on the trading strategies reduces to 
\begin{equation}
\begin{aligned}
a(\pi_1^2 + \pi_2^2) + 2\pi_1\pi_2 + 2c(\pi_1+\pi_2) + c^2-h^2 \geq 0 \text{ and } \pi_1+\pi_2 \leq -c.
\end{aligned}
\end{equation}
Since the objective function and the domain is convex, the first order conditions (FOC) lead to $\pi_1=\pi_2.$  \\

\textbf{N-Dimensions:}\\
Since $A$ has $N-1$ negative eigen values and one positive eigenvalue, the constraint is a 2-sheeted, N-dimensional hyperboloid. Moreover, similar to 2 dimensions, the constraint $\textbf{1}'\Pi \le \frac{-b_2}{b_1}$ also passes through the two cones of the hyperboloid, leading to a convex constraint for the domain. FOC lead to $\pi_1=...=\pi_n.$ 
Let $\Pi = X + b\textbf{1}$, where $b=\frac{-c}{a+n-1}$. This transformation simplifies the constraints as- 
\begin{equation}
\begin{aligned}
& \Pi'A\Pi + 2c(\textbf{1}'\Pi +c) \geq c^2 + h^2 \\
\implies & (X + b\textbf{1})'A(X + b\textbf{1}) + 2c(\textbf{1}'(X + b\textbf{1}) +c) \geq c^2 + h^2 \\
\implies & X'AX + b^2\bOne'A\bOne +2cb\bOne'\bOne + c^2-h^2 \geq 0 \\
\implies & X'AX -\frac{nc^2}{a+n-1} +c^2-h^2 \geq 0 \\
\implies & X'AX -[ \frac{c^2 (1-a)}{a+n-1} + h^2]  \geq 0. \\
\end{aligned}
\end{equation}

Furthermore, the linear constraint gets transformed as - 
\begin{equation}
\begin{aligned}
& \bOne'\Pi+c \leq 0 \\
\implies & \bOne'(X+b\bOne)+c \leq 0 \\
\implies & \bOne'X \leq -c- nb \\
\implies & \bOne'X \leq \frac{c(1-a)}{a+n-1}. \\
\end{aligned}
\end{equation}
Since we have established that the solution exists when, $\pi_1=...=\pi_n \implies x_1=...=x_n.$ Thus, the solution of the deterministic precomittment strategy is
\begin{equation}
\hat{\pi}_{t_i} = \min\lbrace  \bar{\pi},-b -\sqrt{\frac{c^2(1-a)}{n(a+n-1)^2} + \frac{h^2}{{n(a+n-1)}}} \rbrace \text{ }\forall i \in \{0,1,...,N-1 \}
\end{equation}

\end{proof}

\section{Proof of Theorem \ref{unm} }
\begin{proof}
The proof of the result follows from Theorem \ref{un} and backward induction.
\end{proof}

\bibliography{references}
%\bibliographystyle{spmpsci}

% \begin{thebibliography}{9} 
% \bibitem {MTH2014} M. Kwak, T.A. Pirvu, A. Zhang: A Multiperiod Equilibrium Pricing Model, Journal of Applied Mathematics. Volume 2014, Article ID 408685
% \bibitem {TH2013} T. A. Pirvu and H. Zhang: Utility indifference pricing: a time consistent approach, Applied Mathematical Finance, vol. 20, p. 304-326, 2013.
% \bibitem{HMM52}H. M. Markowitz: Portfolio Selection, The Journal of Finance vol. 7, p. 77-91, 1952.
% \bibitem{NM44}J. von Neumann, and O. Morgenstern: Theory of Games and Economic Behavior, Princeton University Press, 1944.
% \bibitem{BV14}C. Bernard,and S. Vanduffel: Mean variance optimal portfolios in the presence of a benchmark with applications to fraud detection, European Journal of
% Operational Research, vol 234, p. 469-480, 2014.
% \bibitem{PKP16}F. Pourbabaee, M. Kwak, and T.A. Pirvu: Risk minimization and portfolio diversification, Quantitative Finance, vol 16, p. 1325-1332, 2016.
% \bibitem{BCV18}C. Bernard, D. Cornilly, and S. Vanduffel: Optimal portfolios under a correlation constraint, Quantitative Finance, vol 18, p. 333-345, 2018.
% \end{thebibliography}

\end{document}